\newtheorem{theorem}{Theorem}[section]
\newtheorem{definition}[theorem]{Definition}
\newtheorem{proposition}[theorem]{Proposition}
\newtheorem{corollary}[theorem]{Corollary}
\newtheorem{lemma}[theorem]{Lemma}
\newtheorem{remark}[theorem]{Remark}
\newtheorem{example}[theorem]{Example}
\def \alg {\operatorname{ALG}}
\title{Robust Least Squares Problems  with Binary Uncertain Data}
\author{Yang Zhou\thanks{School of Mathematics and Statistics, Shandong Normal University, 250014, Jinan, China (zhouyang@sdnu.edu.cn).  This author's work is supported by Natural Science Foundation of Shandong Province (No. ZR2025MS24) and National Science Foundation of China (No. 12371099).}
              \and
Xiaojun Chen\thanks{Department of Applied Mathematics, The Hong Kong Polytechnic University, Kowloon, Hong Kong, China ({xiaojun.chen@polyu.edu.hk}). This author's work is supported by CAS-Croucher Funding Scheme for the CAS AMSS-PolyU Joint Laboratory in Applied Mathematics and Hong Kong Research Grant Council projects PolyU15300023, PolyU15300024.}
}
\begin{document}
\maketitle

\begin{abstract}We propose a Binary Robust Least Squares (BRLS) model that encompasses key robust least squares formulations, such as those involving uncertain binary labels and adversarial noise constrained within a hypercube. {To develop algorithms with theoretical guarantees for the BRLS problem, we exploit the structure of the inner binary maximization problem with a convex quadratic objective function. Refined guarantees are obtained when the noise correlations are sign-structured, in which case the inner problem admits sharper submodular or supermodular oracles.}
{For the supermodular linear BRLS problem, we establish a link between saddle points of its continuous relaxation and global minimax points of BRLS, and propose a projected-gradient algorithm to find an $\epsilon$-global minimax point in $O(\epsilon^{-2})$ iterations. For the supermodular nonlinear BRLS problem, we develop a Moreau-envelope-based framework that finds an $\epsilon$-stationary point in expectation within $O(\epsilon^{-4})$ iterations.}
{For the linear submodular case and the linear general case, we utilize a double-greedy algorithm and a semidefinite relaxation as the respective subsolvers; the latter attains an approximation ratio below $2/\pi$. Coupled with the projected-gradient framework, these oracles yield approximate minimax guarantees within $O(\epsilon^{-2})$ iterations.}
{
Numerical experiments on health status prediction with candidate label-corruption sets, synthetic linear BRLS, and thresholded phase retrieval with missing binary labels illustrate the behavior and robustness gains of the BRLS model under structured noise compared with classical least-squares-based baselines.}

\vspace{0.3cm}
\textbf{Key words:} minimax optimization,  robust least squares problem, supermodular, submodular,  complexity

\vspace{0.3cm}
\textbf{AMS subject classifications:}  49K35, 90C30, 90C31
\end{abstract}

\section{Introduction}\label{sec:introduction}
In many practical applications such as robust control, signal denoising, and adversarial machine learning, we are often faced with problems involving uncertain or corrupted measurements, particularly those subject to adversarial perturbations. The \emph{robust least squares (RLS)} problem is designed to enhance robustness against such noise by minimizing the worst-case residual over an uncertainty set \cite{chen2024min,el1997robust}.
In this paper, we consider the following Binary Robust Least Squares (BRLS) problem:
\begin{equation}\label{eq:binary_RLS}
    \min_{x \in \mathcal{X}} \max_{y \in \mathcal Y} \Theta(x,y) := \frac{1}{2} \|F(x) - C y\|^2, \tag{BRLS}
\end{equation}
where $\mathcal{X} \subseteq \mathbb{R}^m$ is a convex compact set, $ F: \mathbb{R}^m \to \mathbb{R}^r $ is a continuous function, $ \mathcal Y :=\{0,1\}^n = \{ y \in \mathbb{R}^n : y_i \in \{0,1\},\; i = 1,\ldots,n\}$, $ C \in \mathbb{R}^{r \times n} $ is a given matrix encoding the structure of noise propagation, and $\|\cdot\|$ denotes the Euclidean norm.
{Unlike models that consider uncertainty in the operator matrix (e.g., total least squares), our focus is on discrete adversarial noise in the observation vector; despite this difference, the BRLS framework captures a broad and important class of problems involving structured data corruption.}

\paragraph{1.}
    When $C = \mathbf{0}$,  problem  (\ref{eq:binary_RLS}) reduces to the classical least squares problem.

\paragraph{2. The robust least squares problem with uncertain binary labels.}
Consider a binary classification problem where the observed labels are given as $ b \in \{0,1\}^r $, and a subset of them, indexed by $ \mathcal{I} \subseteq [r] := \{1,\ldots,r\} $. {The set $\mathcal I$ can be viewed as a candidate unreliable-label set or a missing label set} \cite{Akhtiamov2024}.
To model the uncertainty in labels, we set $n = r$ and define a diagonal matrix $ D \in \mathbb{R}^{r \times r} $, where each diagonal entry is given by
$$
d_i =
\begin{cases}
1 - 2b_i  & \text{if } i \in \mathcal{I}, \\
0 & \text{otherwise}.
\end{cases}
$$
This ensures that when $y$ takes values in $\{0,1\}^n$, the form $b + D y$ can encompass the true observation.
This problem can then be formulated as
\begin{equation}\label{eq:binary_classification}
    \min_{x \in \mathcal{X}} \max_{y \in \mathcal Y} \frac{1}{2} \| \hat{F}(x) - (b + D y) \|^2,
\end{equation}
where $ \hat{F}: \mathbb{R}^m \to \mathbb{R}^r $ is the prediction function.
This model can be rewritten in the form of problem (\ref{eq:binary_RLS})
by setting
$ F(x) = \hat{F}(x) - b $ and
$ C = D $.

{
\paragraph{3. Robust state estimation with structured fault or attack modes.}
Another natural source of BRLS arises in state-estimation and inverse problems where the residual may be affected by a finite collection of structured faults or attacks. For example, in power-system state estimation, least-squares and weighted least-squares formulations are standard tools for estimating system states from meter measurements \cite{abur2004power}. False-data injection attacks can manipulate selected measurements and bias the estimated state while evading residual-based bad-data detection \cite{liu2009false}. If the $i$th candidate fault or attack template contributes a vector $c_i$ to the residual, then the binary variable $y_i$ indicates whether this mode is activated and the aggregate perturbation is $Cy=\sum_i y_i c_i$. This leads directly to problem (\ref{eq:binary_RLS})
where $F(x)=Hx-b$ with a measurement matrix $H \in \mathbb{R}^{r\times m} $ and an observed measurement vector $b\in \mathbb{R}^r $. In this problem, the columns of $C$ are not artificial labels but physically or operationally meaningful disturbance templates.
}

\paragraph{4. The robust least squares problem with hypercube-constrained {noise}.}
A typical robust least squares problem considers hypercube-constrained noise variables in the following form:
\begin{equation}\label{eq:binary_RLS_chen}
    \min_{x \in \mathcal{X}} \max_{z \in [-\delta,\delta]^n} \frac{1}{2} \|\hat F(x) - \hat C z\|^2, \tag{HRLS}
\end{equation}
where $[-\delta,\delta]^n := \{z : \|z\|_\infty \le \delta\} \subseteq \mathbb R^n$ with a given scalar $\delta > 0$.
This model has been studied by {\cite{chen2024min}}.
{By performing the change of variables,
we show that problems (BRLS) and (HRLS) have the same inner value function in $x$. Moreover, if $C$ has  full column rank, then problems (BRLS) and (HRLS) have the same set of global minimax points.}
The formal derivation of this equivalence and the structural properties of the corresponding inner value function are shown in Subsection~\ref{sec:connection}.

\vspace{0.5cm}

The analysis of minimax problems often relies on a clear understanding of their  saddle points \cite{v1928theorie}.
\begin{definition}\label{def:saddle_point}
    A pair $(x^*,y^*) \in \mathcal X \times \mathcal Y$ is said to be a saddle point of \ref{eq:binary_RLS} if
    \begin{equation*}
        \Theta(x^*,y)\le \Theta(x^*,y^*) \le \Theta(x,y^*),
    \end{equation*}
    for any $x \in \mathcal X$ and $y \in \mathcal Y$.
\end{definition}

It has been shown that even when \( F \) is affine, problem (\ref{eq:binary_RLS_chen}) may not admit saddle points \cite{chen2024min}. Furthermore, for a fixed \( x \in \mathcal{X} \), solving the inner maximization problem to optimality possibly remains NP-hard \cite{ye1992affine}. These challenges motivate the analysis of approximate minimax points, defined as follows. This concept {first} appeared in \cite{adibi2022minimax}, and our definition extends it by imposing approximation conditions on both variables $ x $ and $ y $.

\begin{definition} \label{def:approx_minimax}
A pair $ (x^*, y^*) $ is called an $(\alpha, \epsilon)$-approximate minimax point of \ref{eq:binary_RLS} if it satisfies
\begin{equation}\label{eq:def_appro_minimax_point}
  \alpha \max_{y \in \mathcal{Y}} \Theta(x^*, y)  \le \Theta(x^*, y^*) \le \frac 1 \alpha \min_{x \in \mathcal{X}} \max_{y \in \mathcal{Y}} \Theta(x, y) + \epsilon,
\end{equation}
where $ \alpha \in (0,1] $ and $\epsilon \ge 0$.
\end{definition}

If (\ref{eq:def_appro_minimax_point}) holds with
$\alpha=1$, then $(x^*,y^*)$ is called an $\epsilon$-global minimax point. If (\ref{eq:def_appro_minimax_point}) holds with
$\alpha=1$ and $\epsilon=0$, then $(x^*,y^*)$ is called a global minimax point  \cite{jin2020local}.
Since \( \mathcal{X} \) is compact and convex, \( \mathcal{Y} \) is finite, and \( F \) is continuous, the existence of a global minimax point for \ref{eq:binary_RLS} is guaranteed. As every global minimax point trivially satisfies the definition of an \( (\alpha, \varepsilon) \)-approximate minimax point for any \( \alpha \in (0,1] \) and \( \epsilon \ge 0 \), it follows that such approximate minimax point also exists.

The main challenge in solving \ref{eq:binary_RLS} arises from its mixed discrete-continuous structure.
{Our key insight is to address this by designing provable subroutines for the inner maximization problem. For any fixed outer variable $x \in \mathcal X$, the inner problem is an unconstrained Binary Quadratic Programming (BQP). Further, the structural properties of the noise propagation matrix $ C $, specifically whether its columns form acute or obtuse angles, determine the modularity of the objective function with respect to $ y $. This connection allows us to leverage powerful tools from BQP, submodular, and supermodular optimization to design algorithms with theoretical guarantees. For a general matrix $ C $, we employ a classical SDP relaxation combined with deterministic rounding to construct the subproblem solver.
When all pairwise inner products of the columns of $C$ are nonnegative, the inner problem becomes supermodular and admits polynomial-time solutions;
when all pairwise inner products are nonpositive, it becomes submodular and can be solved greedily with provable approximation guarantees. This leads to algorithmic frameworks that exploit the discrete structure of the uncertainty set.}

{
	The focus of this paper is a robust least-squares template whose inner adversarial problem has an explicit binary and combinatorial structure, rather than a task-specific procedure for any single application domain. This perspective allows us to connect robust least squares with tools from submodular optimization, supermodular optimization, and semidefinite relaxations, and to obtain provable approximate minimax guarantees for several structured regimes.}

It has been shown that RLS problems under ellipsoidal uncertainty sets can be solved efficiently via convex reformulations, allowing for optimal solutions in polynomial time \cite{el1997robust}.
However, the RLS with hypercube uncertainty, i.e., the \ref{eq:binary_RLS_chen} problem is generally NP-hard to solve \cite{murty1985some,ye1992affine}. Recent work \cite{chen2024min} tackles this by leveraging a special structure: when the noise propagation matrix $C$ has a QR decomposition with a positive definite  diagonal matrix  \( R \), the  value function of the inner maximization problem has an explicit formula.
For the least squares formulation with discrete adversarial perturbations, while regularization-based approaches have been studied in the context of binary classification with noisy labels \cite{Akhtiamov2024}, the robust model discussed in this paper has not been systematically addressed in the literature.

We incorporate submodular and supermodular maximization solvers that are well established in the combinatorial optimization literature, using them as subsolvers within our algorithmic framework.
Submodular minimization can be solved in polynomial time and has been extensively studied through a variety of approaches. A detailed overview of these methods is given in \cite[Section 2.3.3]{bilmes2022submodularity}. Numerically, the algorithms by \cite{fujishige2005submodular} and \cite{wolfe1976finding} exhibit strong performance, with the best strongly polynomial algorithms requiring nearly $ O(n^3) $ calls to the function-evaluation oracle \cite{jiang2022minimizing}.
In contrast, submodular maximization is generally NP-hard, but efficient approximations exist.  Buchbinder et al. \cite{buchbinder2015tight} proposed a deterministic double greedy algorithm with a $ \frac{1}{3} $-approximation guarantee for unconstrained maximization over $ \{0,1\}^n $, which is particularly relevant to our work. Extensions to more complex constraints have been studied by Nemhauser et al. and others (see~\cite{calinescu2011maximizing,chekuri2014submodular,nemhauser1978analysis} and references therein).

The main contributions of this paper are as follows.
{
\begin{enumerate}
\item We propose the BRLS model for robust least-squares problems with binary perturbation patterns. We show its efficiency for handling binary label uncertainty/missing and clarify its connection with hypercube-constrained robust least squares through an extreme-point reformulation.
\item In the supermodular linear case, by introducing the Lov\'asz extension of~\ref{eq:binary_RLS}, namely, the \ref{eq:minimax_lovasz} model, {we establish a link between saddle points of \ref{eq:minimax_lovasz} and global minimax points of~\ref{eq:binary_RLS}: the $x$-component of any saddle point of \ref{eq:minimax_lovasz}, together with a binary inner maximizer, yields a global minimax point of BRLS.}
    We then propose a projected gradient algorithm with a Lov\'asz-extension-based exact inner oracle \cite{jiang2022minimizing} as a subsolver  that computes an $\epsilon$-global minimax point in $O(\epsilon^{-2})$ iterations.
For the supermodular nonlinear case, we propose a randomized projected gradient framework that finds an $\epsilon$-stationary point in expectation within $O(\epsilon^{-4})$ iterations, where stationarity is measured by the gradient norm of the Moreau envelope.

\item In the non-supermodular linear case, we distinguish between the submodular setting and the general case. For these two scenarios, we propose a double greedy algorithm and an SDP-based oracle as the respective subsolvers.     When integrated into the projected-gradient framework for linear BRLS, these subsolvers yield $(1/3,\epsilon)$-approximate minimax points in the submodular case and $(\frac2 \pi - \eta , \frac \epsilon {\frac 2 \pi -\eta})$-approximate minimax points in the general case, for any fixed $\eta\in(0,2/\pi)$, within $O(\epsilon^{-2})$ outer iterations.
\end{enumerate}
}

The rest of the paper is organized as follows. Section~\ref{sec:reformulation} formally introduces the relationship between \ref{eq:binary_RLS} and \ref{eq:binary_RLS_chen}, and analyzes the structural properties of \ref{eq:binary_RLS}. Section~\ref{sec:supermodular} investigates the supermodular case, establishing theoretical connections between \ref{eq:binary_RLS} and its continuous formulations, and develops algorithmic frameworks for {both the linear and nonlinear cases of $F$}. Section~\ref{sec:non-supermodular} studies non-supermodular BRLS: it first treats the submodular linear case using a double-greedy approximation oracle, and then handles general linear noise-correlation matrices through an SDP-based approximation oracle. Numerical experiments are reported in Section~\ref{sec:numerical}. In Section \ref{sec:conclusions} we give the conclusion of this paper.

\section*{Notation}

The symbols $\mathbf{1}$ and $\mathbf{0}$ denote vectors or matrices of all ones and all zeros respectively with dimensions inferred from context. The symbol $\mathbf{e}_i$ represents the $i$-th column of the identity matrix in $\mathbb{R}^{n \times n}$. The notation $[n]$ stands for the integer set $\{1, 2, \dots, n\}$. The set $\operatorname{SOL}(f,\mathcal{C})$ denotes the optimal solution set to the optimization problem  $\max_{x \in \mathcal{C}} f(x)$.
The expression $\nabla_x f(x,y)$ denotes the gradient of $f$ with respect to $x$. The projection $\operatorname{Proj}_{\mathcal{X}}(x)$ is the Euclidean projection of point $x$ onto the closed convex set $\mathcal{X}$ defined as $\operatorname{Proj}_{\mathcal{X}}(x) = \arg\min_{z \in \mathcal{X}} \|z - x\|^2$.
For a vector \( y\in \mathbb{R}^n \), we denote by \( \operatorname{supp}(y) = \{i :  y_i\neq 0, i\in [n]\} \).
For two vectors \( y, y'\in \mathbb{R}^n \), the symbols \( y \vee y' \) and \( y \wedge y' \) denote the componentwise maximum and minimum, respectively, defined by \( (y \vee y')_i := \max\{y_i, y'_i\} \) and \( (y \wedge y')_i := \min\{y_i, y'_i\} \) for each \( i \in [n] \).

\section{Structural Properties of the Inner Objective under Noise Correlation}\label{sec:reformulation}
In this section, we focus on the structural properties of the inner maximization problem in \ref{eq:binary_RLS}. Our goal is to understand how the correlation structure of the noise is determined by the geometry of the matrix $ C $, which allows us to leverage powerful tools from  submodular optimization for solving the inner problem efficiently. Before that, we begin with a reformulation of \ref{eq:binary_RLS_chen} which shows that it can be expressed using binary variables.

\subsection{Connection between \ref{eq:binary_RLS} and \ref{eq:binary_RLS_chen}}\label{sec:connection}

In this subsection, we show that finding a global minimax point of \ref{eq:binary_RLS_chen} can be achieved by finding one of \ref{eq:binary_RLS}.
Firstly, we perform the following change of variables:
$$
y = \frac{z}{2\delta} + \frac{1}{2} \mathbf{1}.
$$
This transformation maps $ z \in [-\delta,\delta]^n $ bijectively onto $ y \in [0,1]^n $.
 By letting $ F(x) = \hat F(x) + \delta \hat C \mathbf{1} $ and $ C = 2\delta \hat C $, the \ref{eq:binary_RLS_chen} problem can be reformulated to
\begin{equation}\label{eq:binary_RLS2_chen}
    \min_{x \in \mathcal{X}} \max_{y \in [0,1]^n} \Theta(x,y):= \frac{1}{2} \|F(x) - C y\|^2. \tag{R-HRLS}
\end{equation}

Denote the inner value functions of \ref{eq:binary_RLS} and \ref{eq:binary_RLS2_chen} by
$$
\varphi(x) := \max_{y \in \{0,1\}^n }  \Theta(x,y) , {\rm ~~ and ~~} \hat \varphi(x) := \max_{y \in [0,1]^n }  \Theta(x,y).
$$
Let $ \operatorname{SOL}(\Theta(x,\cdot), \{0,1\}^n) $ and $ \operatorname{SOL}(\Theta(x,\cdot), [0,1]^n) $ denote the sets of optimal solutions to the above two maximization problems, respectively. We now show the relationship between these two sets.

\begin{proposition}\label{prop:P_x_discrete}
For any fixed $ x \in \mathcal{X} $, it holds that $ \operatorname{SOL}(\Theta(x,\cdot), \{0,1\}^n)  \subseteq  \operatorname{SOL}(\Theta(x,\cdot), [0,1]^n)$. If $ C $ has full column rank, then the two sets are equal.
\end{proposition}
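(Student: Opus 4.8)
The plan is to exploit the fact that, for fixed $x \in \mathcal{X}$, the function $y \mapsto \Theta(x,y) = \frac{1}{2}\|F(x)-Cy\|^2 = \frac{1}{2} y^\top C^\top C y - (C^\top F(x))^\top y + \frac{1}{2}\|F(x)\|^2$ is a convex quadratic in $y$, with constant Hessian $C^\top C \succeq \mathbf{0}$. Maximizing $\Theta(x,\cdot)$ over $[0,1]^n$ is therefore a convex maximization over a polytope whose vertex set is exactly $\{0,1\}^n$, and the whole proof reduces to standard consequences of this structure.

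First I would show $\varphi(x) = \hat{\varphi}(x)$. Every $y \in [0,1]^n$ is a convex combination $y = \sum_j \mu_j v_j$ of vertices $v_j \in \{0,1\}^n$, so convexity of $\Theta(x,\cdot)$ gives $\Theta(x,y) \le \sum_j \mu_j \Theta(x,v_j) \le \max_j \Theta(x,v_j) \le \varphi(x)$; taking the maximum over $y \in [0,1]^n$ yields $\hat{\varphi}(x) \le \varphi(x)$, while $\hat{\varphi}(x) \ge \varphi(x)$ is immediate from $\{0,1\}^n \subseteq [0,1]^n$. Hence $\varphi(x) = \hat{\varphi}(x)$. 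The first inclusion then follows at once: if $y^* \in \operatorname{SOL}(\Theta(x,\cdot),\{0,1\}^n)$, then $\Theta(x,y^*) = \varphi(x) = \hat{\varphi}(x)$ and $y^* \in [0,1]^n$, so $y^* \in \operatorname{SOL}(\Theta(x,\cdot),[0,1]^n)$.

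For the reverse inclusion under the full-column-rank assumption, I would use that $C^\top C$ is then positive definite, so $\Theta(x,\cdot)$ is strictly convex. Let $y^* \in \operatorname{SOL}(\Theta(x,\cdot),[0,1]^n)$ and suppose, toward a contradiction, that $y^*_i \in (0,1)$ for some $i$. Then for sufficiently small $t > 0$ the points $y^{\pm} := y^* \pm t\, \mathbf{e}_i$ both lie in $[0,1]^n$, $y^* = \frac{1}{2}(y^+ + y^-)$, and $y^+ \ne y^-$; strict convexity gives $\Theta(x,y^*) < \frac{1}{2}\Theta(x,y^+) + \frac{1}{2}\Theta(x,y^-) \le \max\{\Theta(x,y^+),\Theta(x,y^-)\} \le \hat{\varphi}(x)$, contradicting $\Theta(x,y^*) = \hat{\varphi}(x)$. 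Therefore $y^* \in \{0,1\}^n$, and since $\Theta(x,y^*) = \hat{\varphi}(x) = \varphi(x)$ by the previous step, $y^* \in \operatorname{SOL}(\Theta(x,\cdot),\{0,1\}^n)$, giving equality of the two sets.

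The argument is essentially routine once the convex-quadratic structure of $\Theta(x,\cdot)$ is recognized. The only point requiring care is the strict-convexity step: one must check that full column rank of $C$ is equivalent to positive definiteness of $C^\top C$ (equivalently $\ker C = \{\mathbf{0}\}$, so that $C\,\mathbf{e}_i \ne \mathbf{0}$ and the coordinate perturbation genuinely strictly increases the quadratic), and one should note that without this assumption the inclusion may indeed be strict — for instance when a column of $C$ vanishes the corresponding coordinate of a maximizer over $[0,1]^n$ is unconstrained, so non-binary maximizers can appear.
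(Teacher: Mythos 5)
Your proof is correct and follows essentially the same approach as the paper: convexity of the quadratic $\Theta(x,\cdot)$ forces the maximum over the hypercube to be attained at vertices (giving $\varphi=\hat\varphi$ and the inclusion), and strict convexity under full column rank rules out fractional maximizers. The only cosmetic difference is in the second half, where you exclude a fractional coordinate by a local perturbation $y^*\pm t\,\mathbf{e}_i$ while the paper writes a non-vertex point as a convex combination of vertices; both hinge on the same strict-convexity fact and are equally valid.
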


\begin{proof}
The feasible set $ \{0,1\}^n $ consists of all extreme points of the hypercube $ [0,1]^n $. Since the function $ \Theta $ is convex in $ y $ for any $x$, its maximum over a convex polytope must be attained at one of the extreme points. Therefore, we have $ \operatorname{SOL}(\Theta(x,\cdot), \{0,1\}^n)  \subseteq  \operatorname{SOL}(\Theta(x,\cdot), [0,1]^n)$.

Now suppose that the matrix $ C \in \mathbb{R}^{r \times n} $ has full column rank. In this case, the function $ \Theta(x, \cdot) $ is strictly convex in $ y $.  Then for any ${y} \in [0,1]^n \setminus\{0,1\}^n $ {we can write $y$ as a convex combination of the $2^n$ vertices of the hypercube:}
{$$y = \sum_{\ell=1}^{2^n} \lambda_\ell y^\ell, \text{~~with~}\ \sum_{\ell=1}^{2^n} \lambda_\ell = 1,  ~\lambda_\ell \ge 0, ~y^\ell \in \{0,1\}^n, ~~ \ell = 1,\ldots,2^n. $$
Since $y\notin\{0,1\}^n$, at least two coefficients $\lambda_\ell$ are positive. Thus, for any  $y^* \in \operatorname{SOL}(\Theta(x,\cdot), \{0,1\}^n) $ we have}
$$
{\Theta(x, {y}) < \sum_{\ell=1}^{2^n} \lambda_\ell \Theta(x, y^\ell) \leq \Theta(x, y^*),}
$$
and hence $ \operatorname{SOL}(\Theta(x,\cdot), \{0,1\}^n)  = \operatorname{SOL}(\Theta(x,\cdot), [0,1]^n)$.
This completes the proof.
\end{proof}

From this result, we immediately obtain the equivalence between the two value functions.

\begin{corollary}\label{coro:phi_equiv}
{For any given $ x \in \mathcal{X} $, it holds that $ \varphi(x) = \hat{\varphi}(x) $. Thus, the two formulations \ref{eq:binary_RLS} and \ref{eq:binary_RLS2_chen} have the same set of global minimizers in the variable $x$. If, in addition, $C$ has full column rank, then the two formulations have the same set of global minimax pairs.}
\end{corollary}

{
\begin{remark}
The full-column-rank condition on $C\in\mathbb R^{r\times n}$ is used to guarantee equality between the inner maximizer sets over $\{0,1\}^n$ and $[0,1]^n$, which is needed for the minimax-pair equivalence in Corollary~\ref{coro:phi_equiv}. It is automatically satisfied when the noise propagation directions $c_1,\ldots,c_n$ are linearly independent; in particular, the column-orthogonal case satisfies this condition. When $C$ is rank deficient, the value-function equivalence $\varphi(x)=\hat\varphi(x)$ still holds. What may fail is the equality of the inner maximizer sets, because fractional maximizers can occur in the continuous formulation.
\end{remark}
}

\subsection{Supermodularity or Submodularity under Noise Correlations}

We now examine the structural properties of the function $\Theta$ with respect to the variable $y$, particularly under varying configurations of the matrix $C$.
Let
$$
C = (c_1, \ldots, c_n) \in \mathbb{R}^{r \times n}.
$$
Without loss of generality, we assume $c_i \neq \mathbf 0$ for all $i \in [n]$, and define
$$
\theta_{ij} = \arccos\frac{c_i ^\top c_j}{\|c_i\| \|c_j\|}, \quad \forall ~i,j \in [n].
$$

In \cite{chen2024min}, it was shown that the inner value function in problem (\ref{eq:binary_RLS_chen}) admits an explicit form
$$\max_{y\in [-\delta,\delta]^n}\frac{1}{2}\|\hat{F}(x)-\hat{C}y\|^2
=
\frac{1}{2}\|\hat{F}(x)\|^2
+\delta \|\hat{C}^\top\hat{F}(x)\|_1
+\frac{1}{2}\|\hat{C}\|_F^2\delta^2$$
if
the thin QR factorization $\hat C = QR$ has $Q^\top Q=I$ and $R$ being diagonal with strictly positive diagonal entries.
This implies that $\theta_{ij}=\pi/2$ for all $i\ne j.$

{
When
$\theta_{ij}\leq \pi/2$ for all $i\ne j$, the propagated noise directions
are pairwise nonnegatively correlated. This is consistent with common-source
or common-mode perturbations, where several channels are affected by the same
external interference or environmental factor. For example, power-line or
electromagnetic interference in multi-channel measurement systems may enter
several channels in the same direction, producing reinforcing perturbation
components \cite{huhta1973interference,ott2009emc}. In this case, selecting
multiple binary noise components tends to amplify the aggregate perturbation
$Cy$.

When $\theta_{ij}\geq \pi/2$ for all $i\ne j$, the propagated noise directions
are pairwise nonpositively correlated. This condition is more restrictive, but
it captures idealized competitive or fixed-total perturbation models. In
multinomial, compositional, or resource-allocation data, the components are
constrained by a fixed total mass; increasing one component necessarily reduces
the remaining mass available to the others, which induces negative pairwise
dependence among components \cite{aitchison1982statistical,aitchison1986statistical}.
Thus the obtuse case can be viewed as a model for antagonistic noise sources
whose effects compete rather than reinforce each other.

The orthogonal case $\theta_{ij}=\pi/2$ for all $i\ne j$ corresponds to
decoupled perturbation channels. Each binary variable then controls one
independent component of $Cy$, as in the label-flipping model
\eqref{eq:binary_classification}. Similar orthogonal decompositions appear in
communication systems such as OFDM, where information is transmitted over
mutually orthogonal subcarriers to reduce inter-channel interference
\cite{vannee2000ofdm,weinstein1971data}.
}

{For brevity, we give the following geometric shorthand for the above sign-structured regimes.}

\begin{definition}[Acute/Obtuse Matrix] \label{def:acute_obtuse}
A matrix $ C \in \mathbb{R}^{r \times n} $ is said to be \emph{acute} if $ \theta_{ij} \leq \frac{\pi}{2} $, $i \neq j$, and \emph{obtuse} if $ \theta_{ij} \geq \frac{\pi}{2} $, $i \neq j$, {for all} $i,j \in [n]$.
\end{definition}

A matrix $ C $ is column-orthogonal if and only if it is both acute and obtuse. It is worth noting that these geometric characterizations of $ C $ lead to the supermodularity or submodularity of  $ \Theta(x, \cdot) $.

\begin{definition} \label{def:submodular}
A function $ h: \{0, 1\}^n \to \mathbb{R} $ is called \emph{submodular} if for any $ y , y' \in \{0,1\}^n $ with $y \le y'$ and any index $ i \in [n] \setminus \operatorname{supp}(y') $, it holds that
$$
h(y + \mathbf{e}_i) - h(y) \geq h(y' + \mathbf{e}_i) - h(y').
$$
A function $ h $ is called \emph{supermodular} if $ -h $ is submodular, and \emph{modular} if it is both submodular and supermodular.
\end{definition}

An equivalent characterization of submodularity on the set $ \{0,1\}^n $ is that {\cite{nemhauser1978analysis}}
$$
h(y) + h(y') \geq h(y \vee y') + h(y \wedge y'), \quad \forall y, y' \in \{0,1\}^n.
$$

Now we can establish the relationship between the geometry of the matrix $ C $ and the submodular or supermodular structure of the inner objective function.

\begin{proposition}\label{prop:sub_sup}
The function $ \Theta (x,\cdot)$ is supermodular (resp., submodular) for any fixed $ x \in \mathcal{X} $ if and only if the matrix $ C $ is acute (resp., obtuse).
\end{proposition}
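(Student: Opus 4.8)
The plan is to compute the discrete second difference of $\Theta(x,\cdot)$ explicitly and read off its sign from the inner products $c_i^\top c_j$. Fix $x \in \mathcal{X}$ and write $g = F(x)$ for brevity. For any $y \in \{0,1\}^n$ and any index $i \notin \operatorname{supp}(y)$, expand
\[
\Theta(x, y + \mathbf{e}_i) - \Theta(x, y)
= \frac{1}{2}\|g - Cy - c_i\|^2 - \frac{1}{2}\|g - Cy\|^2
= \frac{1}{2}\|c_i\|^2 - c_i^\top(g - Cy).
\]
Now take $y \le y'$ with $i \notin \operatorname{supp}(y')$ (so also $i \notin \operatorname{supp}(y)$), and subtract the analogous expression at $y'$:
\[
\bigl[\Theta(x, y + \mathbf{e}_i) - \Theta(x, y)\bigr]
- \bigl[\Theta(x, y' + \mathbf{e}_i) - \Theta(x, y')\bigr]
= c_i^\top C(y - y') = \sum_{j \in \operatorname{supp}(y') \setminus \operatorname{supp}(y)} \bigl(-\, c_i^\top c_j\bigr).
\]
Since the $\|c_i\|^2$ and the $c_i^\top g$ terms cancel, the second difference depends only on the off-diagonal inner products, and the key observation is that $y - y'$ has nonpositive entries supported exactly on $\operatorname{supp}(y') \setminus \operatorname{supp}(y)$, which does not contain $i$.

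For the ``if'' direction: if $C$ is acute then $c_i^\top c_j \ge 0$ for all $i \ne j$, so every term in the last sum is $\le 0$, giving
\[
\Theta(x, y + \mathbf{e}_i) - \Theta(x, y) \le \Theta(x, y' + \mathbf{e}_i) - \Theta(x, y'),
\]
which is precisely the reverse of the submodularity inequality in Definition~\ref{def:submodular}, i.e.\ $\Theta(x,\cdot)$ is supermodular. If $C$ is obtuse, every term is $\ge 0$ and the inequality reverses, giving submodularity. For the ``only if'' direction, I would specialize the identity: take $y = \mathbf{0}$ and $y' = \mathbf{e}_j$ for a fixed $j \ne i$. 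Then the second difference equals $-c_i^\top c_j$. Supermodularity of $\Theta(x,\cdot)$ forces this quantity to be $\le 0$, hence $c_i^\top c_j \ge 0$, hence $\theta_{ij} \le \pi/2$; since $i,j$ were arbitrary distinct indices, $C$ is acute. Symmetrically, submodularity forces $c_i^\top c_j \le 0$ for all $i \ne j$, so $C$ is obtuse. This also only requires the hypothesis for fixed $x$, since the $x$-dependent terms dropped out.

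There is no real obstacle here — the proof is a direct computation — but the one point to be careful about is the bookkeeping of supports: one must verify that $i \notin \operatorname{supp}(y')$ (required by Definition~\ref{def:submodular}) together with $y \le y'$ indeed guarantees $i \notin \operatorname{supp}(y)$ and that $(y-y')_k \in \{-1,0\}$ with the value $-1$ occurring exactly on $\operatorname{supp}(y')\setminus\operatorname{supp}(y)$, so that the sum over $j$ is correctly indexed and no diagonal term $c_i^\top c_i$ sneaks in (it cannot, since $i$ is outside that support). I would also state at the outset that $\Theta(x,\cdot)$ is a quadratic function of $y$ with Hessian $C^\top C$, which makes the cancellation of all but the mixed second-order terms transparent and is the conceptual reason the characterization is purely in terms of off-diagonal inner products.
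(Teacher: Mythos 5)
Your proposal is correct and follows essentially the same route as the paper: both compute the discrete second difference of the quadratic $\Theta(x,\cdot)$, observe that the linear and diagonal terms cancel so that it reduces to $\pm\sum_{j\in\operatorname{supp}(y')\setminus\operatorname{supp}(y)} c_i^\top c_j$, and establish necessity by specializing to $y=\mathbf{0}$, $y'=\mathbf{e}_j$. The only cosmetic difference is that you expand $\|F(x)-Cy-c_i\|^2$ directly while the paper works with $\Sigma=C^\top C$ and $u=C^\top F(x)$; the sign bookkeeping and conclusions agree.
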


\begin{proof}
We begin by expanding the function $ \Theta $ as follows:
\begin{equation}
\Theta(x, y) =
\frac{1}{2} \|F(x)\|^2 - F(x)^{\top} C y + \frac{1}{2} y^{\top} C^{\top} C y.
\end{equation}

For fixed $ x \in \mathcal{X} $, define $ u := C^\top F(x) $ and $ \Sigma := C^\top C $. Then we have
\begin{equation}
\Theta(x, y) = \frac{1}{2} y^\top \Sigma y - u^\top y + \frac{1}{2} \|F(x)\|^2
\end{equation}
and
\begin{eqnarray}
\Theta(x, y + \mathbf{e}_i) - \Theta(x, y)
& = & y^\top \Sigma \mathbf{e}_i + \frac{1}{2} \mathbf{e}_i^\top \Sigma \mathbf{e}_i - u^\top \mathbf{e}_i.
\end{eqnarray}
Hence for any $y, y' \in \{0,1\}^n$, $ y \le y' $  and $ i \in [n] \setminus \operatorname{supp}(y') $,
\begin{eqnarray}
&& \Theta(x, y' + \mathbf{e}_i) - \Theta(x, y') - \left[ \Theta(x, y + \mathbf{e}_i) - \Theta(x, y) \right] \nonumber \\
& = & (y' - y)^\top \Sigma \mathbf{e}_i \nonumber \\
& = & \sum_{j: y'_j > y_j}\mathbf{e}_j^\top \Sigma \mathbf{e}_i \nonumber \\
& = & \sum_{j: y'_j > y_j} c_i^\top c_j.
\end{eqnarray}

Based on this expression, we can now proceed to prove the equivalence.

\textbf{Sufficiency.}
If $ C $ is acute (i.e., $ c_i^\top c_j \ge 0 $ for all $ i \ne j $), then the above sum is nonnegative, implying that $ \Theta(x, \cdot) $ is supermodular. Similarly, if $ C $ is obtuse, then the sum is nonpositive, implying submodularity.

\textbf{Necessity.}
Suppose $ \Theta(x, \cdot) $ is supermodular for any $ x \in \mathcal{X} $. Consider the case where $ y = \mathbf{0} $ and $ y' = \mathbf e_j $ for some $ j \ne i $. Then the above difference reduces to
$$
\Theta(x, y' + \mathbf{e}_i) - \Theta(x, y') - \left[ \Theta(x, y + \mathbf{e}_i) - \Theta(x, y) \right] = c_i^\top c_j.
$$
Supermodularity implies that this quantity must be nonnegative for all $ i \ne j $, hence $ C $ must be acute. The same logic applies to submodularity and obtuse matrices.

This completes the proof.
\end{proof}

It is evident from the setting described in \cite{chen2024min} that the function $ \Theta(x, \cdot) $ is modular for any fixed $ x \in \mathcal X $. By the well-known equivalence between modular functions and separable functions \cite{topkis1978minimizing}, this implies that, for any $ x $, solving the inner maximization of \ref{eq:binary_RLS}  reduces to optimizing each coordinate of $ y $ independently. This observation aligns with the approach taken in \cite{chen2024min}, where an explicit solution to the inner maximization problem is derived by separately analyzing each component of $ y $.

\section{Supermodular \ref{eq:binary_RLS}}
\label{sec:supermodular}

In this section, we develop algorithmic frameworks for solving the \ref{eq:binary_RLS} problem under the assumption that the inner objective function is supermodular in the noise variable $ y $. This structural property, established in the previous section, enables us to exploit powerful tools from supermodular optimization to design efficient algorithms with theoretical guarantees.
We divide the discussion into two subsections: the case where $ F $ is affine, which corresponds to a linear RLS formulation that is convex in $x$, and the case where $ F $ is  differentiable, in which $ \Theta $ fails to preserve convexity in $ x $.

\subsection{Supermodular Linear \ref{eq:binary_RLS}}

In this setting,  we first introduce the \emph{Lov\'asz extension} to better understand the structure of the inner problem, which allows us to lift a discrete function into a continuous domain while preserving key structural properties.

\begin{definition}[Lov\'asz Extension, \cite{grotschel2012geometric}]\label{def:lovasz_extension}
Given a function $ h: \{0,1\}^n \to \mathbb{R} $, the \emph{Lov\'asz extension} $ h^L: [0,1]^n \to \mathbb{R} $ of $ h $ is defined as follows. For any $ y \in [0,1]^n $, sort its components in decreasing order: $ y_{j_1} \geq y_{j_2} \geq \cdots \geq y_{j_n} $, where $ (j_1, j_2, \ldots, j_n) $ is a permutation of $ \{1, 2, \ldots, n\} $. Let $ y_{j_0} = 1 $ and $ y_{j_{n+1}} = 0 $. Then, the Lov\'asz extension $ h^L(y) $ is given by
\begin{eqnarray}
    h^L(y)  & = & h(\mathbf 0) + \sum_{k=1}^{n} y_{j_k} \left[ h(\mathbf e_{\{j_1, \ldots, j_k\}}) - h(\mathbf e_{\{j_1, \ldots, j_{k-1}\}}) \right]\\
  \label{eq:lovasz2}  & = & \sum_{k=0}^{n} h(\mathbf e_{\{j_1, \ldots, j_k\}}) (y_{j_k} - y_{j_{k+1}}),
\end{eqnarray}
where $\mathbf e_{S}$ denotes a binary vector with 1 at the support set $S \subseteq [n]$ and 0 otherwise.
\end{definition}

When $ h $ is submodular, a key relationship exists between the minimizers of $ h $ and those of its Lov\'asz extension, as summarized in the following proposition in \cite[Propositions 3.6 and 3.7]{bach2013learning}.

\begin{proposition}[\cite{bach2013learning}]\label{prop:lovasz extension}
For a function $h$, let its Lov\'asz extension $h^L$ be defined as in Definition \ref{def:lovasz_extension}. Then, $ h $ is submodular if and only if $ h^L $ is convex. Furthermore, if $ h $ is submodular, the set of minimizers of $ h^L $ over $ [0,1]^n $ is the convex hull of the minimizers of $ h $ over $ \{0,1\}^n $.
\end{proposition}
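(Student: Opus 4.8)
The plan is to derive everything directly from the defining formula~\eqref{eq:lovasz2}, which already exhibits $h^L(y)$ as a convex combination of values of $h$ on the level sets of $y$. First a harmless normalization: adding a constant to $h$ adds the same constant to $h^L$ (the coefficients in~\eqref{eq:lovasz2} sum to $y_{j_0}-y_{j_{n+1}}=1$), which changes neither convexity nor the minimizer sets, so we may assume $h(\mathbf 0)=0$. Writing $S_k:=\{j_1,\dots,j_k\}$ and $w_k:=y_{j_k}-y_{j_{k+1}}\ge 0$ for $k=0,\dots,n$, formula~\eqref{eq:lovasz2} becomes $h^L(y)=\sum_{k=0}^n w_k\, h(\mathbf e_{S_k})$ with $\sum_k w_k=1$, and, telescoping coordinatewise, $y=\sum_{k=0}^n w_k\,\mathbf e_{S_k}$; also $h^L(\mathbf e_S)=h(\mathbf e_S)$. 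This convex-combination identity, needing no structural hypothesis, is the only mechanical input.

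For the equivalence $h$ submodular $\iff h^L$ convex, I would use that $h^L$ is continuous and affine on each order cone $R_\sigma=\{y:y_{\sigma(1)}\ge\cdots\ge y_{\sigma(n)}\}$, and these cones form a complete polyhedral fan whose walls (codimension-one faces) are exactly the sets where $R_\sigma$ meets $R_{\sigma'}$ for $\sigma'$ obtained from $\sigma$ by transposing two consecutive positions $\ell,\ell+1$, holding elements $a,b$ with common prefix $A:=\{j_1,\dots,j_{\ell-1}\}$. A continuous piecewise-linear function on such a fan is convex iff it is locally convex across every wall, and restricting $h^L$ to a line crossing a wall transversally (raising $y_a$, lowering $y_b$ by equal amounts) yields a one-dimensional piecewise-linear function whose two slopes are $2h(\mathbf e_{A\cup\{a\}})-h(\mathbf e_A)-h(\mathbf e_{A\cup\{a,b\}})$ and $h(\mathbf e_{A\cup\{a,b\}})+h(\mathbf e_A)-2h(\mathbf e_{A\cup\{b\}})$; the convex-kink inequality reduces exactly to $h(\mathbf e_{A\cup\{a\}})+h(\mathbf e_{A\cup\{b\}})\ge h(\mathbf e_{A\cup\{a,b\}})+h(\mathbf e_A)$. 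Quantified over all $A$ and $a\ne b\notin A$, this is the lattice form of submodularity, equivalent (by the characterization recorded after Definition~\ref{def:submodular}) to the diminishing-returns definition. As an alternative for the forward implication one may instead invoke Edmonds' greedy theorem to write $h^L(y)=\max\{\langle w,y\rangle:w\in B(h)\}$, a pointwise maximum of linear functions; and for the converse, evaluating convexity at the midpoint $y=\tfrac12(\mathbf e_{A\cup\{a\}}+\mathbf e_{A\cup\{b\}})$, at which~\eqref{eq:lovasz2} gives $h^L(y)=\tfrac12 h(\mathbf e_A)+\tfrac12 h(\mathbf e_{A\cup\{a,b\}})$, delivers the same pairwise inequality.

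For the "furthermore" part, put $h_{\min}:=\min_{z\in\{0,1\}^n}h(z)$. The convex-combination identity gives $h^L(y)=\sum_k w_k h(\mathbf e_{S_k})\ge h_{\min}$ for all $y\in[0,1]^n$, with equality at $\mathbf e_S$ for any minimizer $S$; hence $\min_{[0,1]^n}h^L=h_{\min}$. If $y=\sum_i\lambda_i\mathbf e_{S_i}$ with each $S_i$ a minimizer of $h$, convexity of $h^L$ (now available from the first part, using submodularity) gives $h^L(y)\le\sum_i\lambda_i h(\mathbf e_{S_i})=h_{\min}$, so $y$ minimizes $h^L$; this shows $\operatorname{conv}\{\mathbf e_S:S\text{ minimizes }h\}\subseteq\arg\min h^L$. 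Conversely, if $h^L(y)=h_{\min}$ then, since $h^L(y)=\sum_k w_k h(\mathbf e_{S_k})$ is a convex combination of quantities all $\ge h_{\min}$, every $S_k$ with $w_k>0$ satisfies $h(\mathbf e_{S_k})=h_{\min}$, i.e. is a minimizer, and $y=\sum_{k:\,w_k>0}w_k\,\mathbf e_{S_k}$ lies in the convex hull of their indicators; this gives the reverse inclusion and closes the loop.

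The main obstacle is the forward implication $h$ submodular $\Rightarrow h^L$ convex: the step "local convexity across every wall $\Rightarrow$ global convexity" for fan piecewise-linear functions, while standard, is not a one-liner, and the alternative via Edmonds' greedy algorithm imports a nontrivial polyhedral lemma; I would cite~\cite{bach2013learning} or~\cite{grotschel2012geometric} for whichever route is used. Everything else—the convex-combination identity, the slope computation at a wall (or the midpoint computation), and both minimizer inclusions—is elementary once that identity is in hand.
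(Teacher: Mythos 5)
The paper does not prove this proposition at all: it is imported verbatim from \cite{bach2013learning} (Propositions 3.6 and 3.7), so there is no in-paper argument to compare against. Your proposal is a correct reconstruction of the standard proof. The convex-combination identity $h^L(y)=\sum_k w_k h(\mathbf e_{S_k})$ with $\sum_k w_k=1$ and $y=\sum_k w_k\mathbf e_{S_k}$ is right, your wall-slope computation checks out (the two slopes you list are exactly the one-sided derivatives across the cone boundary, and the convex-kink inequality is precisely the pairwise submodular inequality $h(\mathbf e_{A\cup\{a\}})+h(\mathbf e_{A\cup\{b\}})\ge h(\mathbf e_A)+h(\mathbf e_{A\cup\{a,b\}})$), the midpoint evaluation of \eqref{eq:lovasz2} for the converse is correct, and both inclusions in the ``furthermore'' part are handled cleanly --- in particular the reverse inclusion, which needs only the convex-combination identity and not convexity. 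Two points worth being explicit about if you were to write this out in full: (i) the step ``local convexity across every wall $\Rightarrow$ global convexity'' for a continuous piecewise-linear function on a complete fan is the genuine content of the forward implication, and you are right that it (or the equivalent base-polytope representation $h^L(y)=\max_{w\in B(h)}\langle w,y\rangle$, which is the route taken in \cite{bach2013learning}) should be cited rather than reproved; (ii) the reduction from the pairwise inequality over all $(A,a,b)$ back to the diminishing-returns form of Definition~\ref{def:submodular} is a short telescoping induction that you assert but do not carry out --- it is standard, but it is a step, not a restatement. With those two citations/remarks supplied, the argument is complete.
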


For a fixed $x$, let  $\Theta^L(x,\cdot)$  be the Lov\'asz extension of $\Theta(x,\cdot)$. Observe that if \( \Theta(x, \cdot) \) is supermodular, then \( -\Theta(x, \cdot) \) is submodular.
By Proposition \ref{prop:lovasz extension}, since \(-\Theta^L(x,\cdot)\) is the Lov\'asz extension of \(-\Theta(x,\cdot)\), \(\Theta^L(x,\cdot)\) is concave.
This allows us to reformulate the original discrete inner maximization problem over $ y \in \{0,1\}^n $ into a continuous optimization problem over $ y \in [0,1]^n $, which facilitates the use of convex-concave minimax optimization tools:
\begin{equation}\label{eq:minimax_lovasz}
\min_{x \in \mathcal{X}} \max_{y \in [0,1]^n} \Theta^L(x, y).\tag{L-BRLS}
\end{equation}
The following proposition establishes the existence of a saddle point for \ref{eq:minimax_lovasz} and its connection to the global minimax points of \ref{eq:binary_RLS} and~\ref{eq:binary_RLS2_chen}.

\begin{proposition}\label{prop:lovasz_minimax}
Assume that $ C $ is acute and $ F $ is affine. Then \ref{eq:minimax_lovasz} admits a saddle point $ (x^*, y^*) $, and for every $ \bar{y} \in \operatorname{SOL}(\Theta(x^*,\cdot) , \mathcal Y) $, the pair $ (x^*, \bar{y}) $ is a global minimax point of \ref{eq:binary_RLS}. Moreover, for any saddle point $(x^*,y^*)$ of \ref{eq:minimax_lovasz}, $ y^* $ lies in the convex hull of $ \operatorname{SOL}(\Theta(x^*,\cdot), \mathcal Y) $.
\end{proposition}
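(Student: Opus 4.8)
The plan is to invoke Sion's minimax theorem for \ref{eq:minimax_lovasz} and then transfer the resulting saddle-point information to \ref{eq:binary_RLS} using the fact that the Lovász extension interpolates $\Theta(x,\cdot)$ on $\{0,1\}^n$ and, by Proposition \ref{prop:lovasz extension}, inherits its structural properties. First I would check that $\Theta^L$ is a continuous convex–concave function on the convex compact set $\mathcal X\times[0,1]^n$. Concavity of $\Theta^L(x,\cdot)$ in $y$ is already recorded before the statement: $-\Theta(x,\cdot)$ is submodular by Proposition \ref{prop:sub_sup} (since $C$ is acute), so $-\Theta^L(x,\cdot)$ is convex by Proposition \ref{prop:lovasz extension}. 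For convexity in $x$ I would use the representation \eqref{eq:lovasz2}: for a fixed $y$, picking any admissible decreasing ordering $(j_1,\dots,j_n)$ of the coordinates of $y$, $\Theta^L(x,y)=\sum_{k=0}^{n}\Theta(x,\mathbf e_{\{j_1,\dots,j_k\}})(y_{j_k}-y_{j_{k+1}})$ is a convex combination of the functions $x\mapsto\Theta(x,\mathbf e_S)$ (the weights $y_{j_k}-y_{j_{k+1}}\ge 0$ telescope to $1$), each of which is convex because $F$ is affine; hence $\Theta^L(\cdot,y)$ is convex. Joint continuity follows from the same piecewise description: on each closed region where a given ordering is admissible the formula is continuous in $(x,y)$, and the formulas agree on overlaps. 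Sion's minimax theorem then gives a saddle point $(x^*,y^*)$ of \ref{eq:minimax_lovasz} and the equality $\min_{x}\max_{y\in[0,1]^n}\Theta^L(x,y)=\max_{y\in[0,1]^n}\min_{x}\Theta^L(x,y)$, attained at $(x^*,y^*)$.

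The second ingredient is the identity $\max_{y\in[0,1]^n}\Theta^L(x,y)=\varphi(x)$ for every $x\in\mathcal X$. I would obtain it by applying the ``furthermore'' part of Proposition \ref{prop:lovasz extension} to the submodular function $-\Theta(x,\cdot)$: the minimizers of its Lovász extension $-\Theta^L(x,\cdot)$ over $[0,1]^n$ form the convex hull of $\operatorname{SOL}(\Theta(x,\cdot),\mathcal Y)$, so the minimal value of $-\Theta^L(x,\cdot)$ over $[0,1]^n$ equals its minimal value over $\{0,1\}^n$; equivalently, using that $\Theta^L$ agrees with $\Theta$ on $\{0,1\}^n$, $\max_{y\in[0,1]^n}\Theta^L(x,y)=\max_{y\in\{0,1\}^n}\Theta(x,y)=\varphi(x)$. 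Consequently $\min_{x\in\mathcal X}\max_{y\in[0,1]^n}\Theta^L(x,y)=\min_{x\in\mathcal X}\varphi(x)$.

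With these in place the remaining steps are short. For the saddle point $(x^*,y^*)$, the saddle inequalities give $\varphi(x^*)=\max_{y\in[0,1]^n}\Theta^L(x^*,y)=\Theta^L(x^*,y^*)=\min_{x}\max_{y\in[0,1]^n}\Theta^L(x,y)=\min_{x}\varphi(x)$, so $x^*$ minimizes $\varphi$; combined with the hypothesis $\bar y\in\operatorname{SOL}(\Theta(x^*,\cdot),\mathcal Y)$ this yields $\Theta(x^*,\bar y)=\varphi(x^*)=\min_{x}\max_{y\in\mathcal Y}\Theta(x,y)$, which is exactly the chain \eqref{eq:def_appro_minimax_point} with $\alpha=1$, $\epsilon=0$, i.e. $(x^*,\bar y)$ is a global minimax point of \ref{eq:binary_RLS} (and hence, by Corollary \ref{coro:phi_equiv}, of \ref{eq:binary_RLS2_chen}). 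Finally, for any saddle point $(x^*,y^*)$ we have $y^*\in\arg\max_{y\in[0,1]^n}\Theta^L(x^*,y)=\arg\min_{y\in[0,1]^n}\bigl(-\Theta^L(x^*,y)\bigr)$, and by Proposition \ref{prop:lovasz extension} the latter set is the convex hull of $\operatorname{SOL}(\Theta(x^*,\cdot),\mathcal Y)$, which proves the last claim.

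The step I expect to demand the most care is verifying the hypotheses of Sion's theorem, namely convexity of $\Theta^L(\cdot,y)$ in $x$ and joint continuity of $\Theta^L$ on $\mathcal X\times[0,1]^n$, because these must be argued from the piecewise-linear structure of the Lovász extension rather than from any differentiability of $\Theta$. Once the identity $\max_{y\in[0,1]^n}\Theta^L(x,\cdot)=\varphi(x)$ and the minimizer characterization of Proposition \ref{prop:lovasz extension} are available, the translation back to global minimax points of \ref{eq:binary_RLS} is essentially bookkeeping.
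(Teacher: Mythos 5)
Your proposal is correct and follows essentially the same route as the paper: verify that the Lov\'asz extension $\Theta^L$ is convex in $x$ and concave in $y$, invoke Sion's theorem on $\mathcal X\times[0,1]^n$, establish $\max_{y\in[0,1]^n}\Theta^L(x,y)=\varphi(x)$ to conclude that $x^*$ minimizes $\varphi$, and use the minimizer characterization of Proposition~\ref{prop:lovasz extension} for the convex-hull claim. The only cosmetic differences are that you justify the identity $\varphi^L=\varphi$ via the ``furthermore'' part of Proposition~\ref{prop:lovasz extension} rather than reading it off the convex-combination formula \eqref{eq:lovasz2} as the paper does, and that you explicitly check the joint continuity hypothesis of Sion's theorem, which the paper leaves implicit.
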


\begin{proof}
In this setting, Proposition~\ref{prop:lovasz extension} implies that $ \Theta^L(x, y) $ is concave in $ y $ for any $x \in \mathcal X$.
Furthermore, since $ F $ is affine in $ x $, it follows that $ \Theta(x, y) $ is convex in $ x $ for any $y \in \mathcal Y$. By Equation (\ref{eq:lovasz2}), for any fixed $ y \in [0,1]^n $, function $ \Theta^L(x, y) $ is a nonnegative linear combination of $ \Theta(x, y') $ with all $y' \in \mathcal Y$, and therefore remains convex in $ x $.
Now we consider the minimax problem \ref{eq:minimax_lovasz}.
Since $ \mathcal{X} $ is compact and convex, the Sion Theorem \cite{Sion}  guarantees the existence of a saddle point $ (x^*, y^*) $ of the \ref{eq:minimax_lovasz} problem.

Let the inner value function of \ref{eq:minimax_lovasz} be $\varphi^L(x) := \max_{y \in [0,1]^n} \Theta^{L}(x,y)$.
According to \eqref{eq:lovasz2}, for any $x \in \mathcal{X}$ and $y \in [0,1]^n$, $\Theta^{L}(x,y)$ is a convex combination of the values $\Theta(x, y')$ over all $ y' \in \{0,1\}^n$.
Then the inner maximization of $\varphi^L(x)$ can be attained at some point in $\mathcal Y$ and hence $\varphi(x) = \varphi^L(x)$ for all $x \in \mathcal{X}$.
Therefore, $x^*$ minimizes both of them, and the pair $(x^*, \bar y)$ defined in the proposition is a global minimax point of \ref{eq:binary_RLS}.

Finally, by Proposition~\ref{prop:lovasz extension} and the fact that $y^*$ maximizes $\Theta^L(x^*,\cdot)$ on $[0,1]^n$ we complete the proof.
\end{proof}

To illustrate the relationship between \ref{eq:minimax_lovasz} and~\ref{eq:binary_RLS2_chen} established in Proposition~\ref{prop:lovasz_minimax}, we consider the following simple example.

\begin{example}
Consider the \ref{eq:binary_RLS_chen} problem with
$$
\Theta(x, y) = (x - y)^2, \quad x, y \in [-1, 1].
$$
To simplify the form without affecting its essential structure, here the domain of $y$ has been simply shifted and scaled from $[0,1]$ to [-1,1]. By restricting $ y $ to $\{-1, 1\}$ and taking the Lov\'asz extension, we obtain
$$
\Theta^L(x, y) = \frac{y + 1}{2}(x - 1)^2 + \frac{1 - y}{2}(x + 1)^2.
$$
The graphs of these two functions are depicted in Figure \ref{fig:minimax_saddle}. One can verify that $(0, 0)$ is a saddle point of \ref{eq:minimax_lovasz}, while both $(0, 1)$ and $(0, -1)$ are global minimax points of \ref{eq:binary_RLS} and \ref{eq:binary_RLS_chen}. Obviously, $(0, 0)$ lies precisely in the convex hull of $(0, 1)$ and $(0, -1)$.
\end{example}

\begin{figure}[htbp]
	\centering
	\includegraphics[width=1\textwidth]{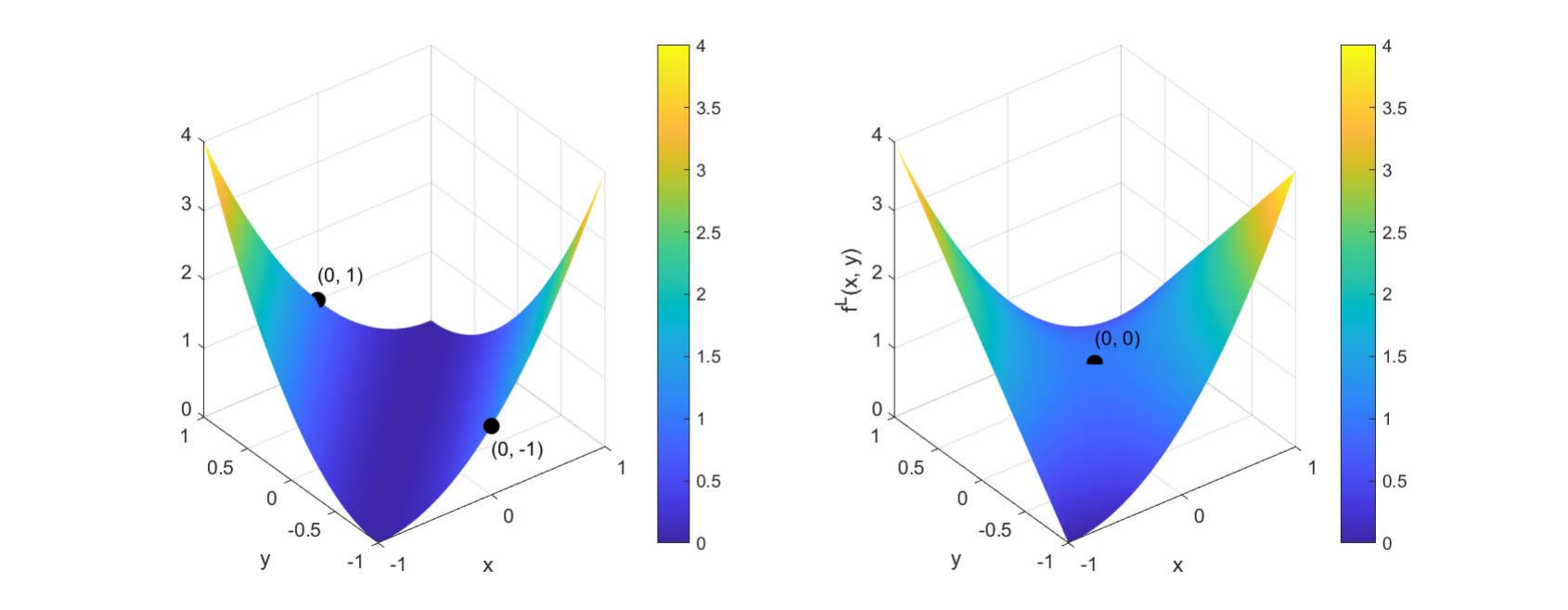}
	\caption{The function $\Theta = (x-y)^2 $ in \ref{eq:binary_RLS_chen} with its global minimax points (left) and $\Theta^L$ in \ref{eq:minimax_lovasz} with its saddle point (right).}
	\label{fig:minimax_saddle}
\end{figure}

The \ref{eq:binary_RLS2_chen} problem is not a convex-concave minimax problem and gradient-based methods cannot ensure to find a global minimax point of the problem from any initial point. However, based on Proposition~\ref{prop:lovasz_minimax}, we can devise an ideal two-step strategy to find a global minimax point via its relationship with \ref{eq:minimax_lovasz}:
\begin{enumerate}
    \item Compute a saddle point $(\hat x, \hat y)$ of the convex-concave minimax problem~\ref{eq:minimax_lovasz}, for instance using a subgradient descent-ascent method.
    \item Fix $ x = \hat x $ and solve the inner maximization problem in $\varphi(\hat x)$ to obtain a global minimax point of the original problem~\ref{eq:binary_RLS}.
\end{enumerate}

In particular, we present Algorithm \ref{alg:RLLS_submax}, a projected gradient-type algorithm for \ref{eq:binary_RLS} with an affine function $F$.
This algorithm relies on a solver $\alg(\Theta(x,\cdot))$ called $\gamma$-approximation algorithm.  In the following, we give a formal definition of $\gamma$-approximation algorithm, which is commonly studied in combinatorial optimization \cite[Appendix A.3]{vazirani2001approximation}.

    \begin{algorithm}[h]
\caption{Projected Gradient Algorithm for Linear \ref{eq:binary_RLS}}
\label{alg:RLLS_submax}
\begin{algorithmic}[1]
\REQUIRE $\Theta$, $ x_0 \in \mathcal{X} $,  $ K $.
\FOR{$ k = 0, \ldots, K-1 $}
    \STATE $ y_k \leftarrow \alg(\Theta(x_k, \cdot)) $;
    \STATE $ x_{k+1} \leftarrow \operatorname{Proj}_{\mathcal{X}}\left(x_k - K^{-\frac{1}{2}} \nabla_x \Theta(x_k, y_k)\right) $;
\ENDFOR
\RETURN $ \hat{x} = \frac{1}{K} \sum_{k=0}^{K-1} x_k $ and $ \hat{y} = \alg(\Theta(\hat x, \cdot)) $.
\end{algorithmic}
\end{algorithm}

\begin{definition} \label{def:approx_alg}
An algorithm is said to be a $\gamma$-approximation algorithm for a maximization problem
\begin{equation}\label{eq:def_approximation_alg}
  \max_{y\in \mathcal Y} h(y)
\end{equation}
with non-negative objective $ h $, if on each instance it returns a feasible solution $ \hat{y} \in \mathcal Y $ in polynomial time such that
$$
h(\hat{y}) \ge \gamma  h(y^*),
$$
where $ y^* $ is an optimal solution of (\ref{eq:def_approximation_alg}) and $ \gamma \in (0,1] $.
\end{definition}

Since $\mathcal{X}$ is compact and convex, we denote by $D > 0$ its diameter, i.e., $\|x - x'\| \leq D$ for all $x, x' \in \mathcal{X}$. When $F$ is affine, the function $\Theta(x, y)$ is globally Lipschitz continuous in $x$ over $\mathcal{X}$ for any fixed $y \in \{0,1\}^n$. In this case, we let $L > 0$ denote a common Lipschitz constant valid uniformly over all such $y$.
The following lemma provides a formal guarantee on the quality of the solution returned by the projected gradient algorithm.

\begin{lemma}\label{thm:alg_PG}
Assume that $F$ is affine, and let $K = \left\lceil \left( \frac{D^2 + L^2}{2\epsilon} \right)^2 \right\rceil$ for a given $\epsilon > 0$. If the subsolver $\alg$ is a $\gamma$-approximation algorithm for maximizing $\Theta(x, \cdot)$ over $\{0,1\}^n$ for any $x \in \mathcal{X}$, then the output $(\hat{x}, \hat{y})$ of Algorithm~\ref{alg:RLLS_submax}
is a $(\gamma, \epsilon/\gamma)$-approximate minimax point of \ref{eq:binary_RLS}.
\end{lemma}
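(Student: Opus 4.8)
The plan is to combine a standard projected subgradient analysis for the outer minimization with the $\gamma$-approximation guarantee of the subsolver. First I would set up the regret-type bound: for each $k$, since $x_{k+1} = \operatorname{Proj}_{\mathcal{X}}(x_k - \eta\, \nabla_x \Theta(x_k, y_k))$ with step size $\eta = K^{-1/2}$, the firm nonexpansiveness of the projection gives, for every $x \in \mathcal{X}$,
\[
\|x_{k+1} - x\|^2 \le \|x_k - x\|^2 - 2\eta\, \langle \nabla_x \Theta(x_k, y_k), x_k - x\rangle + \eta^2 \|\nabla_x \Theta(x_k, y_k)\|^2.
\]
Because $F$ is affine, $\Theta(\cdot, y_k)$ is convex, so $\langle \nabla_x \Theta(x_k,y_k), x_k - x\rangle \ge \Theta(x_k, y_k) - \Theta(x, y_k)$, and $\|\nabla_x \Theta(x_k, y_k)\| \le L$ uniformly in $y_k \in \{0,1\}^n$. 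Telescoping over $k = 0, \ldots, K-1$, dividing by $2\eta K$, and using $\|x_0 - x\| \le D$ yields
\[
\frac{1}{K}\sum_{k=0}^{K-1} \big(\Theta(x_k, y_k) - \Theta(x, y_k)\big) \le \frac{D^2}{2\eta K} + \frac{\eta L^2}{2} = \frac{D^2 + L^2}{2\sqrt{K}},
\]
with the last equality using $\eta = K^{-1/2}$. With the stated choice $K = \lceil ((D^2+L^2)/(2\epsilon))^2 \rceil$, the right-hand side is at most $\epsilon$.

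Next I would convert this averaged inequality into the two bounds of Definition~\ref{def:approx_minimax}. For the right inequality: take $x$ to be a minimizer of $\varphi$ over $\mathcal{X}$; then $\Theta(x, y_k) \le \varphi(x) = \min_{x'} \max_y \Theta(x', y)$ for every $k$. The subsolver guarantee gives $\Theta(x_k, y_k) \ge \gamma\, \varphi(x_k) \ge \gamma\, \Theta(x_k, y)$ for all $y$, hence $\varphi(x_k) \le \tfrac{1}{\gamma}\Theta(x_k, y_k)$. Combined with convexity of $\varphi$ (it is a pointwise max of convex functions, being affine-composed), Jensen's inequality gives $\varphi(\hat{x}) \le \tfrac{1}{K}\sum_k \varphi(x_k) \le \tfrac{1}{\gamma K}\sum_k \Theta(x_k, y_k)$. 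Plugging the averaged bound in,
\[
\varphi(\hat x) \le \frac{1}{\gamma}\left( \frac{1}{K}\sum_{k=0}^{K-1}\Theta(x, y_k) + \epsilon \right) \le \frac{1}{\gamma}\Big(\min_{x'\in\mathcal X}\max_{y\in\mathcal Y}\Theta(x',y)\Big) + \frac{\epsilon}{\gamma},
\]
i.e. $\Theta(\hat x, \hat y) \le \varphi(\hat x) \le \tfrac{1}{\gamma}\min_{x}\max_y \Theta(x,y) + \epsilon/\gamma$, which is the upper bound in \eqref{eq:def_appro_minimax_point} with $\alpha=\gamma$ and tolerance $\epsilon/\gamma$. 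For the left inequality, the subsolver applied at $\hat x$ returns $\hat y = \alg(\Theta(\hat x,\cdot))$ with $\Theta(\hat x, \hat y) \ge \gamma\, \varphi(\hat x) = \gamma \max_{y\in\mathcal Y}\Theta(\hat x, y)$, which is exactly $\gamma \max_y \Theta(\hat x, y) \le \Theta(\hat x, \hat y)$. Together these show $(\hat x, \hat y)$ is a $(\gamma, \epsilon/\gamma)$-approximate minimax point.

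The main subtlety — not really an obstacle, but the step that needs care — is the passage from the per-iterate subsolver bound $\Theta(x_k, y_k) \ge \gamma\varphi(x_k)$ to a bound on $\varphi(\hat x)$ at the averaged point: one cannot directly average the maximizers, so one must average the upper bounds $\varphi(x_k) \le \tfrac1\gamma \Theta(x_k,y_k)$ and then invoke convexity of $\varphi$ together with Jensen to pull the average inside $\varphi$. This is where affineness of $F$ is used twice — once for Lipschitz/convexity of $\Theta(\cdot, y_k)$ in the regret bound, and once to ensure $\varphi$ itself is convex so Jensen applies to $\hat x = \tfrac1K\sum_k x_k$. I would also note explicitly that $\Theta \ge 0$, so the $\gamma$-approximation definition (which requires a non-negative objective) is legitimately applicable to $\Theta(x,\cdot)$ for every $x$. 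The rest is the bookkeeping in the displays above.
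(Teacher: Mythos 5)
Your proposal is correct and follows essentially the same route as the paper's proof: the projected-subgradient regret bound with step size $K^{-1/2}$, the per-iterate $\gamma$-approximation inequality $\Theta(x_k,y_k)\ge\gamma\varphi(x_k)$, convexity of $\varphi$ plus Jensen at the averaged point $\hat x$, and the final subsolver call at $\hat x$ for the left inequality. The only (welcome) addition is your explicit remark that $\Theta\ge 0$ is needed for the $\gamma$-approximation definition to apply, which the paper leaves implicit.
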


\begin{proof}
Since $ F$ is affine in $ x $, the function $ \Theta(x, y) $ is convex in $ x $ for any fixed $ y $, and so is the value function $ \varphi(x) = \max_{y \in \{0,1\}^n} \Theta(x, y) $.

Based on Step 3 of Algorithm~\ref{alg:RLLS_submax} and the convexity of the set $ \mathcal{X} $, we obtain for any $ x \in \mathcal{X} $ and $ k = 0, \ldots, K-1 $:
\begin{align*}
\|x_{k+1} - x\|^2
&\le \|x_k - K^{-\frac{1}{2}} \nabla_x \Theta(x_k, y_k) - x\|^2 \\
&= \|x_k - x\|^2 + \| K^{-\frac{1}{2}} \nabla_x \Theta(x_k, y_k) \|^2 - 2\left\langle K^{-\frac{1}{2}} \nabla_x \Theta(x_k, y_k), x_k - x \right\rangle \\
&\le \|x_k - x\|^2 - 2\left\langle K^{-\frac{1}{2}} \nabla_x \Theta(x_k, y_k), x_k - x \right\rangle + \frac{L^2}{K}.
\end{align*}

From the convexity of $ \Theta(\cdot, y_k) $, for any $x \in \mathcal X$ there holds
\begin{align*}
\Theta(x_k, y_k) - \Theta(x, y_k)
&\le \left\langle \nabla_x \Theta(x_k, y_k), x_k - x \right\rangle \\
&\le \frac{K^{\frac{1}{2}}}{2} \left( \|x_k - x\|^2 - \|x_{k+1} - x\|^2 \right) + \frac{L^2}{2K^{\frac{1}{2}}}.
\end{align*}

Summing over $ k = 0 $ to $ K-1 $, we get
\begin{eqnarray}
\nonumber \sum_{k=0}^{K-1}  \Theta(x_k, y_k) - \Theta(x, y_k)
&\le & \frac{K^{\frac{1}{2}}}{2} \|x_0 - x\|^2 + \frac{L^2}{2K^{\frac{1}{2}}} \cdot K \\
&\le & \frac{K^{\frac{1}{2}}}{2} (D^2 + L^2). \label{eq:convex_sum}
\end{eqnarray}

Furthermore, since subsolver $\alg $ is a $ \gamma$-approximation algorithm, we have
$$
\Theta(x_k, y_k) \ge \gamma \varphi(x_k) , \quad \forall k = 0,\ldots,K-1.
$$

Summing this inequality over all $ k $, we obtain
\begin{equation}
    \sum_{k=0}^{K-1} \gamma \varphi(x_k) - \Theta(x_k, y_k)  \le 0. \label{eq:submodular_sum}
\end{equation}

Combining inequalities~\eqref{eq:convex_sum} and~\eqref{eq:submodular_sum}, we deduce
$$
\sum_{k=0}^{K-1}  \gamma \varphi(x_k) - \Theta(x, y_k)  \le \frac{K^{\frac{1}{2}}}{2} (D^2 + L^2) .
$$

Using the convexity of $ \varphi $ and the definition of $\hat x$, we further have
\begin{align*}
\gamma \varphi(\hat{x})
&\le \frac{1}{K} \sum_{k=0}^{K-1} \gamma \varphi(x_k) \\
&\le \frac{1}{K} \sum_{k=0}^{K-1} \Theta(x, y_k) + \frac{D^2 + L^2}{2K^{\frac{1}{2}}}  \\
&\le \varphi(x) + \frac{D^2 + L^2}{2K^{\frac{1}{2}}} .
\end{align*}
By setting \(K =
\left\lceil \left(\frac{D^2+L^2}{2\epsilon}\right)^2 \right\rceil\), we obtain
\[
\Theta(\hat{x},\hat y)
\le \varphi(\hat{x}) \le
\frac{1}{\gamma}\min_{x\in\mathcal X}\varphi(x)
+\frac{\epsilon}{\gamma}
=
\frac{1}{\gamma}\min_{x\in\mathcal X}\max_{y\in\mathcal Y}\Theta(x,y)
+\frac{\epsilon}{\gamma}.
\]
Finally, applying the subsolver ALG to maximize \(\Theta(\hat{x},\cdot)\) over \(\mathcal Y\), we obtain
\[
\Theta(\hat{x},\hat y)\ge \gamma\varphi(\hat{x})
=
\gamma\max_{y\in\mathcal Y}\Theta(\hat{x},y).
\]
Therefore, \((\hat{x},\hat y)\) is a \((\gamma,\epsilon/\gamma)\)-approximate minimax point.
\end{proof}

Lemma \ref{thm:alg_PG} requires that the subsolver $\alg$ satisfies approximation guarantee when solving the inner maximization problem individually. In this context, we consider invoking algorithms for solving supermodular maximization (submodular minimization) problems, which are known to be polynomial-time solvable.

\begin{theorem}[\cite{jiang2022minimizing}, Theorem 1.7] \label{thm:supermodular_alg}
Given an Evaluation Oracle (EO) for a submodular function $h: \{0,1\}^n \to \mathbb{R}$, there exists a strongly polynomial-time algorithm that minimizes $h$ using $O\left(n^3 \log \log n / \log n\right)$ calls to EO.
\end{theorem}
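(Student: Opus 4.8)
The statement is a state-of-the-art bound for submodular function minimization (SFM) that the paper invokes as a black box from~\cite{jiang2022minimizing}; here is how I would reconstruct a proof from the classical SFM toolkit combined with modern cutting-plane machinery. \emph{Reduction to convex optimization.} First I would reduce minimizing $h$ over $\{0,1\}^n$ to minimizing its Lov\'asz extension $h^L$ over $[0,1]^n$. By Proposition~\ref{prop:lovasz extension} $h^L$ is convex, and one checks directly from Definition~\ref{def:lovasz_extension} that $\min_{[0,1]^n}h^L=\min_{\{0,1\}^n}h$ and that a minimizer of $h$ is recovered by thresholding any minimizer of $h^L$ at an arbitrary level in $(0,1)$. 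The payoff is a cheap separation oracle: a subgradient of $h^L$ at $y$ is the vertex $g$ of the base polytope $B(h):=\{g\in\mathbb R^n : g([n])=h([n]),\ g(S)\le h(S)\ \forall S\subseteq[n]\}$ produced by Edmonds' greedy rule — sort $y_{j_1}\ge\cdots\ge y_{j_n}$ and put $g_{j_k}=h(\{j_1,\dots,j_k\})-h(\{j_1,\dots,j_{k-1}\})$; telescoping this against $y$ reproduces exactly the linear piece of $h^L$ in~\eqref{eq:lovasz2} containing $y$. Each oracle call costs $n+1$ EO calls plus a sort, leaving an $n$-dimensional convex program with an $O(n)$-EO separation oracle.

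\emph{Cutting-plane localization and strong polynomiality.} Feeding this oracle to a state-of-the-art cutting-plane method (Vaidya's volumetric-center method, or Lee--Sidford--Wong) localizes a minimizer in nearly linearly many (in $n$) oracle calls, which already yields a weakly polynomial SFM algorithm. To upgrade to strong polynomiality I would layer on the ring-family framework: maintain a family $\mathcal F\subseteq 2^{[n]}$ closed under union and intersection (stored compactly via its precedence/arc representation) that provably contains a minimizer, and run SFM restricted to $\mathcal F$. Whenever the cutting-plane run stalls, a proximity/scaling argument shows the current point of $B(h)$ has a coordinate bounded away from the interior of its feasible interval, which certifies a new implication of the form ``$i$ in the minimizer $\Rightarrow j$ in the minimizer'' (or ``$j$ out of the minimizer''); each such implication strictly shrinks $\mathcal F$ and lowers the effective dimension of the residual problem. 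Since at most $n(n-1)$ implications are possible, there are $O(n^2)$ learning events, and one obtains SFM in $O(n^3)$ EO calls up to polylogarithmic factors.

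\emph{The refinement and the main obstacle.} Sharpening ``$O(n^3)$ up to polylogs'' to $O(n^3\log\log n/\log n)$ is the real content of~\cite{jiang2022minimizing}, and I expect it to be the hard part of any self-contained proof. The mechanism is: (i) phases after a learning event run on strictly smaller dimension, so the EO cost is front-loaded; (ii) a batching device extracts order $\log n$ implications from a single expensive cutting-plane phase rather than one — by running the method on a suitably scaled aggregate of the residual problems — so that only $O(n^2/\log n)$ expensive phases are needed; and (iii) the cutting-plane parameters are tuned so each expensive phase costs $O(n\log\log n)$ EO calls, which multiplies out to the claimed bound. The genuinely delicate points are the proximity/scaling lemma that turns ``localization stalled'' into a \emph{valid} implication, the bit-complexity control that keeps the whole scheme strongly polynomial (independent of the magnitude of $h$), and the amortized accounting validating the $\log n$-batching. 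Since these are precisely the technical contributions of~\cite{jiang2022minimizing}, in this paper we invoke the theorem as stated and use the resulting algorithm as the subsolver $\alg$; note that converting a fractional minimizer of $h^L$ back to an integral minimizer of $h$ — equivalently, a maximizer of the supermodular $\Theta(x,\cdot)$ — costs only $O(n)$ further EO calls.
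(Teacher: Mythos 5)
This statement is an imported result: the paper offers no proof of it and simply cites \cite{jiang2022minimizing}, Theorem 1.7, so there is no internal argument to compare yours against. Your reconstruction of the classical pipeline is accurate as far as it goes --- reducing SFM to minimizing the Lov\'asz extension over $[0,1]^n$, recovering an integral minimizer by thresholding, obtaining a subgradient from Edmonds' greedy vertex of the base polytope at a cost of $n+1$ EO calls, and feeding this separation oracle into a cutting-plane method with a strongly polynomial dimension-reduction layer is indeed the standard route (Lee--Sidford--Wong) to an $O(n^3)$-type EO bound up to polylogarithmic factors.

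However, the mechanism you conjecture for the sharpening to $O(n^3\log\log n/\log n)$ --- batching $\log n$ ring-family implications per cutting-plane phase --- is not how \cite{jiang2022minimizing} actually obtains it, and the surrounding text of this paper signals the true mechanism by calling the resulting subsolver the ``Lov\'asz-SVP solver.'' Jiang's argument exploits the fact that the Lov\'asz extension of a submodular function admits an \emph{integral} minimizer: when the cutting-plane localizer becomes thin, one solves a shortest-vector problem in an associated lattice to produce a hyperplane containing \emph{all} integral points of the localizer, which yields a genuine reduction of the ambient dimension rather than a single set-inclusion implication, and the amortized accounting over these lattice-based reductions is what produces the $\log\log n/\log n$ saving. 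Since you explicitly defer exactly this step to the citation, your proposal is not a proof of the stated bound but an (essentially correct) survey of the background plus the same citation the paper uses --- which is acceptable here, but if the sketch is meant to explain \emph{why} the stated complexity is achievable, the implication-batching story should be replaced by the integrality/SVP argument.
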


{The strongly polynomial submodular minimization algorithm of \cite{jiang2022minimizing} applies Algorithm 3 of that paper to the Lov\'{a}sz extension of the target submodular function. For clarity, we use this exact submodular minimization routine as an inner oracle and refer to it as the \emph{Lov\'{a}sz-extension-based SFM (Submodular Function Minimization) oracle}.}
Together with earlier results, this implies the polynomial-time convergence of Algorithm \ref{alg:RLLS_submax} in the supermodular linear case.

\begin{theorem}
\label{theorem:supermodular_case}
Assume that $F$ is affine and $C$ is acute. Set $ K = \left\lceil \left(\frac{D^2 + L^2}{2\epsilon}\right)^2 \right\rceil $ for a given $\epsilon > 0$.
For the \ref{eq:binary_RLS} problem, Algorithm \ref{alg:RLLS_submax} with $\alg$ being the {Lov\'{a}sz-extension-based SFM oracle} in \cite{jiang2022minimizing} outputs an $\epsilon$-global minimax point in polynomial time.
\end{theorem}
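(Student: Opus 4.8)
The plan is to assemble the pieces already developed: the structural characterization in Proposition~\ref{prop:sub_sup}, the exact submodular-minimization guarantee of Theorem~\ref{thm:supermodular_alg}, and the convergence bound of Lemma~\ref{thm:alg_PG}. The only genuinely new observation is that, in the acute case, the Lov\'asz--SVP solver is not merely an approximation but an \emph{exact} maximizer of $\Theta(x,\cdot)$ over $\{0,1\}^n$, i.e.\ it realizes the parameter $\gamma = 1$ in Definition~\ref{def:approx_alg}.

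First I would record that, since $C$ is acute, Proposition~\ref{prop:sub_sup} gives that $\Theta(x,\cdot)$ is supermodular for every fixed $x \in \mathcal X$, hence $-\Theta(x,\cdot)$ is submodular on $\{0,1\}^n$. Consequently, for each iterate $x_k$, the inner maximization $\max_{y\in\{0,1\}^n}\Theta(x_k,y)$ is equivalent to minimizing the submodular function $-\Theta(x_k,\cdot)$. The value $-\Theta(x_k,y) = -\tfrac12\|F(x_k) - Cy\|^2$ is computable in time polynomial in $n,r$ given $x_k$ (it requires only the products $Cy$ and a squared norm), so the evaluation oracle demanded by Theorem~\ref{thm:supermodular_alg} is implementable in polynomial time. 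Theorem~\ref{thm:supermodular_alg} then returns a \emph{global} minimizer of $-\Theta(x_k,\cdot)$ with $O(n^3 \log\log n/\log n)$ oracle calls, i.e.\ a point $y_k$ with $\Theta(x_k,y_k) = \varphi(x_k)$. Since $\Theta(x_k,\cdot)\ge 0$, the Lov\'asz--SVP solver meets Definition~\ref{def:approx_alg} with $\gamma = 1$ for every $x_k$, and likewise for the final call producing $\hat y$.

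Next I would invoke Lemma~\ref{thm:alg_PG} with $\gamma = 1$: with $K = \lceil ((D^2+L^2)/(2\epsilon))^2\rceil$ and $F$ affine, the output $(\hat x, \hat y)$ is a $(\gamma, \epsilon/\gamma) = (1,\epsilon)$-approximate minimax point of~\ref{eq:binary_RLS}. By the discussion following Definition~\ref{def:approx_minimax}, an $(1,\epsilon)$-approximate minimax point is precisely an $\epsilon$-global minimax point, which gives the claimed accuracy. For the running time, each of the $K$ iterations performs one Lov\'asz--SVP call (polynomial in $n,r$ as above) and one projected-gradient update $x_{k+1} = \operatorname{Proj}_{\mathcal X}(x_k - K^{-1/2}\nabla_x\Theta(x_k,y_k))$, whose cost is polynomial in $m,n,r$ plus one projection onto $\mathcal X$; the final line performs one additional subsolver call. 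Since $K = O(\epsilon^{-2})$ (with $D,L$ fixed), the total cost is polynomial in $n,r,m$ and $1/\epsilon$, as stated.

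I do not expect a serious obstacle: the theorem is essentially the specialization $\gamma = 1$ of Lemma~\ref{thm:alg_PG}. The points deserving care are (i) confirming that the oracle of Theorem~\ref{thm:supermodular_alg} yields an \emph{exact} inner optimizer, so the approximation factor collapses to unity; (ii) checking that evaluating $-\Theta(x_k,\cdot)$ and projecting onto $\mathcal X$ are polynomial-time primitives, so that ``polynomial time'' is literally justified; and (iii) being precise that the guarantee is polynomial in $1/\epsilon$ rather than $\log(1/\epsilon)$, consistent with the $O(\epsilon^{-2})$ iteration count announced in the introduction.
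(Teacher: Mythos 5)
Your proposal is correct and follows essentially the same route as the paper: use Proposition~\ref{prop:sub_sup} to get supermodularity of $\Theta(x,\cdot)$ in the acute case, observe that the Lov\'asz--SVP solver of Theorem~\ref{thm:supermodular_alg} is then an exact ($\gamma=1$) inner maximization oracle, and specialize Lemma~\ref{thm:alg_PG} to $\gamma=1$. Your additional remarks on oracle implementability and the precise meaning of ``polynomial time'' are sound elaborations of what the paper leaves implicit.
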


\begin{proof}
According to Theorem \ref{thm:supermodular_alg} and Propositions  \ref{prop:sub_sup}, \ref{prop:lovasz extension}, when $C$ is acute, the {Lov\'{a}sz-extension-based SFM oracle} serves as an exact (i.e., 1-approximation) maximization oracle for $\Theta(x,\cdot)$ on $\{0,1\}^n$ with any $x \in \mathcal X$. Therefore, by Lemma \ref{thm:alg_PG}, Algorithm \ref{alg:RLLS_submax} with this  subsolver returns an $\epsilon$-global minimax point after $K = \left\lceil \left(\frac{D^2 + L^2}{2\epsilon}\right)^2 \right\rceil$ iterations.
\end{proof}
	
{In this supermodular case, the Lov\'{a}sz-extension-based SFM oracle gives a clean polynomial-time exact oracle and a transparent worst-case complexity bound. For large-scale implementations, it can be replaced by some practical alternatives including the Fujishige-Wolfe minimum-norm-point method~\cite{fujishige2005submodular,wolfe1976finding,chakrabarty2014provable}, active-set methods for submodular minimization~\cite{kumar2017active}, and first-order or bundle-type methods applied to the Lov\'{a}sz extension~\cite{bach2013learning}. If these methods are terminated approximately, then the outer convergence analysis would need to account for the resulting inner error.}

Based on the theoretical results of Algorithm~\ref{alg:RLLS_submax}, we can formalize the advantage of our methodological framework when applied to solve the hypercube type uncertainty model~\ref{eq:binary_RLS_chen} compared to continuous minimax optimization methods.

\begin{remark}
For the supermodular linear case, we can conclude that the model \ref{eq:binary_RLS_chen} is a smooth convex-nonconcave minimax problem, allowing us to employ the state-of-the-art {alternating gradient projection (AGP)} algorithm in \cite{xu2023unified} for its solution. In terms of the convergence, Algorithm~\ref{alg:RLLS_submax} yields an $\epsilon$-global minimax point, whereas AGP can only achieve an $\epsilon$-stationary point. Regarding iteration complexity, the complexity of Algorithm~\ref{alg:RLLS_submax} is $O(\epsilon^{-2})$. This matches the complexity of AGP when $\Theta$ is strongly convex in $x$, which cannot be guaranteed in our problem. For the case where $\Theta$ is  convex in $x$, the iteration complexity of AGP is $O(\epsilon^{-4})$.
\end{remark}

		\subsection{Supermodular Nonlinear \ref{eq:binary_RLS}}
In this subsection we consider the case that $F$ is nonlinear and differentiable. The lack of guaranteed convexity of $\Theta$ with respect to $x$ leads to a modification of the outer algorithmic framework, as shown in Algorithm~\ref{alg:RNLS}. Unlike Algorithm \ref{alg:RLLS_submax} which returns the mean of trajectory points, this algorithm returns \( \hat{x} \) selected uniformly from trajectory points.

	\begin{algorithm}[htbp]
		\caption{Projected Gradient Algorithm for Nonlinear \ref{eq:binary_RLS}}
		\label{alg:RNLS}
		\begin{algorithmic}[1] 
			\REQUIRE $\Theta$, $x_0\in {\cal X}$, $K$, $\mu$
			\FOR{$k = 0, \ldots, K$}
			\STATE  $y_k \leftarrow \alg(\Theta(x_k,\cdot))$;
			\STATE $x_{k+1} =
					\operatorname{Proj}_{\mathcal X}(
			x_k - \mu \nabla_x \Theta(x_k,y_k)
					)
			$;
			\ENDFOR
			\STATE Randomly select \(\hat{x}\) from the set
\(\{x_k\}_{k = 0} ^{K}\)  with uniform distribution;
			\STATE $ \hat y\leftarrow \alg(\Theta(\hat x,\cdot))$;
			\RETURN $\hat x$,  $\hat y$.
		\end{algorithmic}
	\end{algorithm}

Since $\mathcal{X}$ is compact and convex and $F$ is differentiable, for every fixed $y \in \mathcal{Y}$, the function $\Theta(\cdot, y)$ and its gradient $\nabla_x \Theta(\cdot, y)$ are globally Lipschitz continuous over $\mathcal{X}$. We denote by $L$ and $\ell$ the uniform Lipschitz constants of $\Theta(\cdot, y)$ and $\nabla_x \Theta(\cdot, y)$, respectively, valid for all $y \in \mathcal{Y}$.
Under the above condition and notations, it is obvious that the value function of \ref{eq:binary_RLS} \( \varphi \) is \( \ell \)-weakly convex, meaning that \( \varphi(x) + \frac{\ell}{2} \|x\|^2 \) is convex over \( \mathcal{X} \). This enables us to define an \( \epsilon \)-stationary point for the problem via the Moreau envelope.
{For weakly convex and possibly nonsmooth objectives, the gradient norm of the Moreau envelope has become a standard stationarity measure, as it provides a smooth surrogate whose small gradient indicates near-stationarity of a nearby proximal point; see, e.g., \cite{davis2018stochastic,davis2019stochastic}.}

    \begin{definition}
    \label{def:moreau_envelope}
     The \emph{Moreau envelope} of $\varphi$ with a parameter $\lambda >0$ is defined as
    \[
    \varphi_{\lambda}(x) = \min_{w \in \mathcal X} \left\{ \varphi(w) + \frac{1}{2\lambda} \|w - x\|^2 \right\},
    \]
    with its associated proximity operator
       \[
   \operatorname{prox}_{\varphi,\lambda}(x) = \arg\min_{w \in \mathcal X} \left\{ \varphi(w) + \frac{1}{2\lambda} \|w - x\|^2 \right\}.
    \]
\end{definition}

Since $\varphi$ is $\ell$-weakly convex, we have $\operatorname{prox}_{\varphi, \frac{1}{2\ell}}$ is single-valued and thus $\varphi_{\frac{1}{2\ell}}$ is differentiable \cite[Proposition 13.37]{rockafellar1998variational}, allowing us to study $\epsilon$-stationary points of \ref{eq:binary_RLS} using the Moreau envelope.

Denote $\Delta = \varphi(x_0) - \min_{x \in \mathcal X} \varphi(x)$.
The following theorem gives the convergence and complexity of Algorithm \ref{alg:RNLS}.
    \begin{theorem}\label{thm:convergence_stationary}
    Assume that  $F$ is differentiable and $ C $ is acute. Set
    \[
    \mu = \left( \frac{\Delta}{L^2 \ell (K+1)} \right)^{\frac{1}{2}} {\rm ~~with ~~}    K =  \left \lfloor   \frac{64 L^2 \ell \Delta }{\epsilon^4}\right \rfloor,
    \]
    for a given $\epsilon >0$. Then Algorithm \ref{alg:RNLS} with $\alg$ being the {Lov\'{a}sz-extension-based SFM oracle} in \cite{jiang2022minimizing} outputs a point \((\hat{x}, \hat{y})\) with \( \hat{x} \) satisfying
\begin{equation}\label{stationary3}
    \mathbb{E} \left[ \|\nabla \varphi_{\frac{1}{2\ell}}(\hat{x})\| \right] \leq \epsilon.
    \end{equation}
   \end{theorem}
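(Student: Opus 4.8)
The plan is to follow the now-standard analysis of projected stochastic/subgradient methods for weakly convex functions, adapted to the fact that here the only source of ``noise'' is that $\alg$ may return a suboptimal maximizer of $\Theta(x_k,\cdot)$ — but since $C$ is acute, Theorem~\ref{thm:supermodular_alg} together with Propositions~\ref{prop:sub_sup} and~\ref{prop:lovasz extension} guarantees that the Lov\'asz-SVP solver is an \emph{exact} maximization oracle, so $y_k\in\operatorname{SOL}(\Theta(x_k,\cdot),\mathcal Y)$ and $\nabla_x\Theta(x_k,y_k)$ is an exact element of the (Clarke) subdifferential of $\varphi$ at $x_k$. Thus $x_{k+1}=\operatorname{Proj}_{\mathcal X}(x_k-\mu g_k)$ with $g_k\in\partial\varphi(x_k)$ and $\|g_k\|\le L$.

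First I would fix $\lambda=\frac{1}{2\ell}$ and set $\hat x_k:=\operatorname{prox}_{\varphi,\lambda}(x_k)$, which is single-valued since $\varphi$ is $\ell$-weakly convex. The key one-step inequality is obtained by comparing $x_{k+1}$ to $\hat x_k$: using nonexpansiveness of the projection and the definition of $x_{k+1}$,
\begin{align*}
\|x_{k+1}-\hat x_k\|^2 &\le \|x_k-\mu g_k-\hat x_k\|^2
= \|x_k-\hat x_k\|^2 - 2\mu\langle g_k, x_k-\hat x_k\rangle + \mu^2\|g_k\|^2.
\end{align*}
For the inner product I would invoke the weak-convexity subgradient inequality $\langle g_k, \hat x_k - x_k\rangle \le \varphi(\hat x_k)-\varphi(x_k) + \frac{\ell}{2}\|\hat x_k - x_k\|^2$, and combine it with $\|g_k\|^2\le L^2$ and the bound $\varphi_\lambda(x_{k+1}) \le \varphi(\hat x_k) + \frac{1}{2\lambda}\|x_{k+1}-\hat x_k\|^2$ (which holds because $\hat x_k$ is merely a feasible point for the minimization defining $\varphi_\lambda(x_{k+1})$). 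Chaining these, and using $\varphi_\lambda(x_k)=\varphi(\hat x_k)+\frac{1}{2\lambda}\|x_k-\hat x_k\|^2$ together with the identity $\|\nabla\varphi_\lambda(x_k)\| = \lambda^{-1}\|x_k-\hat x_k\|$ (Proposition 13.37 of \cite{rockafellar1998variational}), one arrives at a descent inequality of the form
$$
\varphi_\lambda(x_{k+1}) \le \varphi_\lambda(x_k) - \frac{\mu}{4\lambda^2}\left(\tfrac{1}{2\ell}-\ldots\right)\|\nabla\varphi_\lambda(x_k)\|^2 + \mu^2\ell L^2,
$$
where the precise constant in front of $\|\nabla\varphi_\lambda(x_k)\|^2$ is a positive numerical multiple of $\mu\ell$ once $\lambda=\frac{1}{2\ell}$ is substituted (here is where the ``$64$'' eventually comes from). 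The main bookkeeping obstacle is tracking these constants carefully so that they match the stated $\mu$ and $K$; this is routine but must be done precisely.

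Next I would telescope from $k=0$ to $K$. Since $\varphi_\lambda(x_{K+1}) \ge \min_{x\in\mathcal X}\varphi_\lambda(x) \ge \min_{x\in\mathcal X}\varphi(x)$ and $\varphi_\lambda(x_0)\le\varphi(x_0)$, the telescoped sum yields
$$
\frac{1}{K+1}\sum_{k=0}^{K} \|\nabla\varphi_\lambda(x_k)\|^2 \;\le\; \frac{c_1\,\Delta}{\mu(K+1)} + c_2\,\mu\, L^2\ell^2
$$
for absolute constants $c_1,c_2$. Because $\hat x$ is drawn uniformly from $\{x_k\}_{k=0}^K$, the left-hand side is exactly $\mathbb E[\|\nabla\varphi_\lambda(\hat x)\|^2]$. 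Optimizing the right-hand side over $\mu$ gives the stated choice $\mu=\bigl(\Delta/(L^2\ell(K+1))\bigr)^{1/2}$, under which the bound becomes $c_3 L\ell^{1/2}(\Delta/(K+1))^{1/2}$; plugging in $K=\lfloor 64L^2\ell\Delta/\epsilon^4\rfloor$ makes this at most $\epsilon^2$. Finally, by Jensen's inequality $\mathbb E[\|\nabla\varphi_\lambda(\hat x)\|] \le (\mathbb E[\|\nabla\varphi_\lambda(\hat x)\|^2])^{1/2} \le \epsilon$, which is \eqref{stationary3}. The only genuinely delicate point, beyond constant-chasing, is justifying that $\nabla_x\Theta(x_k,y_k)\in\partial\varphi(x_k)$ — this follows from Danskin-type reasoning for the pointwise maximum of finitely many smooth functions once $y_k$ is an exact maximizer, which the acuteness of $C$ secures.
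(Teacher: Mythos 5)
Your proposal is correct and follows essentially the same route as the paper: a Moreau-envelope descent analysis for the $\ell$-weakly convex value function $\varphi$, using nonexpansiveness of the projection, exactness of the Lov\'asz-SVP oracle, telescoping, the identification of the uniform draw with the average of $\|\nabla\varphi_{1/(2\ell)}(x_k)\|^2$, and Jensen's inequality. The only cosmetic difference is that you bound $\langle \nabla_x\Theta(x_k,y_k),\hat x_k-x_k\rangle$ via a Danskin-type subgradient inequality for $\varphi$, whereas the paper obtains the identical bound directly from the $\ell$-smoothness of $\Theta(\cdot,y_k)$ together with $\Theta(x_k,y_k)=\varphi(x_k)$ and $\Theta(\hat x_k,y_k)\le\varphi(\hat x_k)$; the constants you leave as $c_1,c_2,c_3$ do work out to the stated $\mu$ and the factor $64$.
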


	\begin{proof}
		Denote $\hat x_{k} = \operatorname{prox}_{\varphi,\frac 1 {2\ell} } (x_{k})$. For $k = 1,\ldots,K+1$, we have
		\begin{eqnarray}
			\nonumber \| \hat x_{k-1} -  x_{k} \|^2 & = &\| \operatorname{Proj}_{\mathcal X} (\hat x_{k-1}) - \operatorname{Proj}_{\mathcal X}(x_{k-1} - \mu \nabla_x \Theta(x_{k-1}, y_{k-1}) )\|^2 \\
         \nonumber   & \le & \| \hat x_{k-1} - x_{k-1} + \mu \nabla_x \Theta(x_{k-1}, y_{k-1}) \|^2 \\
			\nonumber & \le & \|\hat x_{k-1} -  x_{k-1}\|^2 + 2 \mu \left \langle  \hat x_{k-1} - x_{k-1} , \nabla_x \Theta(x_{k-1}, y_{k-1})\right\rangle + \mu^2 L^2.
		\end{eqnarray}
		Together by the definition of $\varphi_{\frac 1 {2\ell}}$ we obtain
		\begin{eqnarray}
			\nonumber	\varphi_{\frac 1 {2\ell}} (x_k) & \le & \varphi(\hat x_{k-1}) + \ell \|\hat x_{k-1} - x_{k}\|^2 \\
			\nonumber & = & \varphi_{\frac 1 {2\ell}}  ( x_{k-1})  - \ell  \|\hat x_{k-1} - x_{k-1}\|^2 + \ell \|\hat x_{k-1} - x_{k}\|^2 \\
			& \le & \varphi_{\frac 1 {2\ell}}  ( x_{k-1}) + 2 \mu \ell \left \langle  \hat x_{k-1} - x_{k-1} , \nabla_x \Theta(x_{k-1}, y_{k-1})\right\rangle + \mu ^2 L^2 \ell . \label{eq:nonlinear_lem_1}
		\end{eqnarray}
		By the $\ell$-Lipschitz continuity of $\nabla_x\Theta(\cdot,y_{k-1})$ we have
		\begin{eqnarray}
			\nonumber &&\left\langle \hat x_{k-1} - x_{k-1} , \nabla_x \Theta(x_{k-1}, y_{k-1}) \right\rangle \\
          \nonumber  & \le & \Theta(\hat x_{k-1}, y_{k-1} ) -  \Theta( x_{k-1}, y_{k-1} )  + \frac \ell 2   \|\hat x_{k-1} - x_{k-1}\|^2 ,
		\end{eqnarray}
        Since $\alg$ is a $1$-approximation algorithm, then
		\begin{eqnarray}
			\nonumber \Theta(\hat x_{k-1}, y_{k-1} ) -  \Theta( x_{k-1}, y_{k-1}) & \le & \varphi(\hat x_{k-1}) - \varphi(x_{k-1}) \\
			\nonumber  & \le &  \varphi( x_{k-1}) - \ell \|\hat x_{k-1} - x_{k-1}\|^2 - \varphi(x_{k-1})  \\
			& = &  - \ell \|\hat x_{k-1} - x_{k-1}\|^2  . \label{eq:nonlinear_lem_2}
		\end{eqnarray}
		Thus together by (\ref{eq:nonlinear_lem_1}), (\ref{eq:nonlinear_lem_2}) and the fact $2 \ell\|\hat x_{k-1} - x_{k-1}\| = \|\nabla \varphi_{\frac 1 {2\ell}}(x_{k-1})\|$ we have
		\begin{eqnarray}
			\nonumber	&& \varphi_{\frac 1 {2\ell}} (x_k) - 	\varphi_{\frac 1 {2\ell}} (x_{k-1}) \\
\nonumber & \le &
			2 \mu \ell   \left[ \Theta(\hat x_{k-1}, y_{k-1} ) -  \Theta( x_{k-1}, y_{k-1}) \right]   + \mu \ell^2   \|\hat x_{k-1} - x_{k-1}\|^2  + \mu ^2 L^2 \ell \\
			\nonumber & \le &   - \mu \ell^2   \|\hat x_{k-1} - x_{k-1}\|^2  + \mu ^2 L^2 \ell  \\
			& = &  -  \frac{\mu}{4}  \|\nabla \varphi_{\frac 1 {2\ell}}(x_{k-1})\|^2  + \mu ^2 L^2 \ell. \label{eq: varphi_difference_bound}
		\end{eqnarray}

        Summing $(\ref{eq: varphi_difference_bound})$ from $k = 1$ to $K + 1$ yields
		\begin{eqnarray}
		\nonumber	\varphi_{\frac 1 {2 \ell}} (x_{K +1}) - 	\varphi_{\frac 1 {2 \ell}} (x_{0})  & \le &  -  \frac{\mu}{4}  \sum_{k = 0} ^ {K}\|\nabla \varphi_{\frac 1 {2\ell}}(x_{k})\|^2  + \mu ^2 L^2 \ell(K+1).
		\end{eqnarray}
		Thus
		\begin{eqnarray}
		\nonumber	\frac 1 {K +1 }\sum_{k = 0} ^ {K}\|\nabla \varphi_{\frac 1 {2\ell}}(x_{k})\|^2 & \le & \frac{4 \Delta}{\mu (K+1)} + 4 \mu L^2 \ell.
		\end{eqnarray}
		Substituting $\mu = \left( \frac{\Delta}{L^2\ell (K+1)}  \right)^{\frac12}$ and $K = \left \lfloor   \frac{64 L^2 \ell \Delta }{\epsilon^4}\right \rfloor$ yields
		\begin{eqnarray}
\nonumber \mathbb{E}\big[\|\nabla \varphi_{\frac{1}{2\ell}}(\hat{x})\|\big] & \le & \left(\mathbb{E}\big[\|\nabla \varphi_{\frac{1}{2\ell}}(\hat{x})\|^2\big] \right)^{\frac 12} \\
    	\nonumber	& = &	\left( \frac 1 {K +1 }\sum_{k = 0} ^ {K}\|\nabla \varphi_{\frac 1 {2\ell}}(x_{k})\|^2 \right)^{\frac 12} \le \epsilon.
		\end{eqnarray}
 We complete the proof.
	\end{proof}

The inequality (\ref{stationary3}) means that a randomly selected point $\hat{x}$ with the uniform distribution from the iterates $\{x_k\}_{k=0}^{K}$ generated by Algorithm \ref{alg:RNLS} is an $\epsilon$-stationary point in expectation, as measured by the gradient norm of the Moreau envelope \cite{davis2018stochastic}.

{\section{Non-Supermodular BRLS}\label{sec:non-supermodular}

This section considers BRLS beyond the supermodular setting. We begin with the linear submodular case, where the inner maximization is an unconstrained submodular maximization problem and can be handled by a double-greedy approximation oracle. Moreover, we discuss this approach for nonlinear submodular case by linearization of the nonlinear function $F$ at iterates. Finally, we  treat the linear case with a general noise propagation matrix through an SDP-based approximation oracle. }

\subsection{Submodular Linear \ref{eq:binary_RLS}} \label{subsec:submodular linear}

In this subsection, we explore another scenario in which $C$ is obtuse, meaning that $\Theta(x,\cdot)$ is submodular for any $x \in \mathcal X$ according to Proposition~\ref{prop:sub_sup}.
We focus on the case that $F$ is affine.
For this case, we can still employ Algorithm~\ref{alg:RLLS_submax} as the outer-loop framework.
Unlike the supermodular case, computing an exact solution in $\operatorname{SOL}(\Theta(x,\cdot), \mathcal{Y})$ for a given $x$ is unknown to be solvable in polynomial time in this case. Nevertheless, efficient approximation algorithms for submodular maximization allow us to compute approximate solutions with provable guarantees.
We adopt the deterministic double greedy method from \cite{buchbinder2015tight} described in Algorithm~\ref{alg:DDG}, and provide full theoretical guarantees tailored to our setting.

\begin{algorithm}[h]
\caption{Double Greedy Algorithm ($\operatorname{DG}(\Theta(x,\cdot))$)}
\label{alg:DDG}
\begin{algorithmic}[1]
\REQUIRE  $ \Theta(x, \cdot) $.
\STATE $ \underline{y}^0 \leftarrow \mathbf{0} $, $ \overline{y}^0 \leftarrow \mathbf{1} $;
\FOR{$ k = 1, \ldots, n $}
    \STATE $ a \leftarrow \Theta(x, \underline{y}^{k-1} + \mathbf{e}_k) - \Theta(x, \underline{y}^{k-1}) $;
    \STATE $ b \leftarrow \Theta(x, \overline{y}^{k-1} - \mathbf{e}_k) - \Theta(x, \overline{y}^{k-1}) $;
    \IF{$ a \geq b $}
        \STATE $ \underline{y}^k \leftarrow \underline{y}^{k-1} + \mathbf{e}_k $;
        \STATE $ \overline{y}^k \leftarrow \overline{y}^{k-1} $;
    \ELSE
        \STATE $ \underline{y}^k \leftarrow \underline{y}^{k-1} $;
        \STATE $ \overline{y}^k \leftarrow \overline{y}^{k-1} - \mathbf{e}_k $;
    \ENDIF
\ENDFOR
\RETURN $ \hat{y} = \underline{y}^n = \overline{y}^n $.
\end{algorithmic}
\end{algorithm}
	
Algorithm \ref{alg:DDG} generates two sets of iterates: one starting from $ \underline{y}^0 = \mathbf{0} $ and the other from $ \overline{y}^0 = \mathbf{1} $. At each step, it compares marginal gains to determine whether to set the current component to 1 or 0. Algorithm \ref{alg:DDG} terminates after $n$ iterations at a feasible solution $\hat{y}=\overline{y}^n=
\underline{y}^n$.
	The following lemma provides the approximation guarantee for this algorithm.
	
	\begin{lemma}\label{lem:DG}
When \( C \) is obtuse, Algorithm~\ref{alg:DDG} is a \( \frac{1}{3} \)-approximation algorithm for maximizing \( \Theta(x, \cdot) \) over \( \{0,1\}^n \) for any fixed \( x \in \mathcal{X} \). Moreover, when \( C \) is column-orthogonal, Algorithm~\ref{alg:DDG} returns an exact optimal solution to the maximization problem.
	\end{lemma}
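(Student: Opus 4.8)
The plan is to invoke the general $\frac{1}{3}$-approximation guarantee for the deterministic double greedy algorithm on unconstrained nonnegative submodular maximization over $\{0,1\}^n$, due to Buchbinder et al.~\cite{buchbinder2015tight}, and then to verify by hand the improved exact-optimality claim in the column-orthogonal case. First I would check the hypotheses of that general result are met: by Proposition~\ref{prop:sub_sup}, when $C$ is obtuse the function $\Theta(x,\cdot)$ is submodular on $\{0,1\}^n$ for every fixed $x$; and since $\Theta(x,y)=\tfrac12\|F(x)-Cy\|^2\ge 0$, the objective is nonnegative, so Definition~\ref{def:approx_alg} applies. Algorithm~\ref{alg:DDG} is precisely the deterministic double greedy of~\cite{buchbinder2015tight} specialized to the ground set $[n]$ with $\underline y^0=\mathbf 0$, $\overline y^0=\mathbf 1$, processing coordinates $1,\dots,n$ and comparing the two marginal gains $a$ and $b$ at each step; it runs in $O(n)$ evaluations of $\Theta(x,\cdot)$, hence in polynomial time. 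Invoking~\cite[Theorem]{buchbinder2015tight} then gives $\Theta(x,\hat y)\ge\frac13\max_{y\in\{0,1\}^n}\Theta(x,y)$, which is exactly the $\frac13$-approximation claim.

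For the second statement, I would use the column-orthogonality of $C$, which by Definition~\ref{def:acute_obtuse} means $c_i^\top c_j=0$ for $i\ne j$, so $\Sigma=C^\top C$ is diagonal and $\Theta(x,\cdot)$ is modular (separable): writing $\Theta(x,y)=\sum_{i=1}^n g_i(y_i)+\text{const}$ with $g_i(t)=\tfrac12\|c_i\|^2 t^2-u_i t$ and $u_i=c_i^\top F(x)$, the marginal gains decouple. The key observation is that in the separable case the marginal gain $a=\Theta(x,\underline y^{k-1}+\mathbf e_k)-\Theta(x,\underline y^{k-1})=g_k(1)-g_k(0)$ and $b=\Theta(x,\overline y^{k-1}-\mathbf e_k)-\Theta(x,\overline y^{k-1})=g_k(0)-g_k(1)=-a$, since neither depends on the other coordinates. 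Hence at step $k$ the test $a\ge b$ is just $a\ge 0$, i.e.\ $g_k(1)\ge g_k(0)$, and the algorithm sets $\hat y_k=1$ precisely when this holds and $\hat y_k=0$ otherwise (when $a<0$). This is exactly the coordinatewise optimal choice $\hat y_k\in\arg\max_{t\in\{0,1\}}g_k(t)$, and since the global maximum of a separable function over $\{0,1\}^n$ is attained by maximizing each $g_k$ independently, $\hat y$ is a global optimum of $\Theta(x,\cdot)$ over $\{0,1\}^n$; this also recovers the explicit solution of~\cite{chen2024min}.

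The main obstacle I anticipate is the first part rather than the second: making the reduction to~\cite[Theorem]{buchbinder2015tight} fully rigorous requires confirming that Algorithm~\ref{alg:DDG} matches the deterministic double greedy verbatim — same initialization, same monotone coupling $\underline y^k\le\overline y^k$ maintained throughout, and the same tie-breaking rule ($a\ge b$ chooses the increment) — and that submodularity is used only in the form guaranteed by Proposition~\ref{prop:sub_sup}. One subtlety worth flagging is the boundary case where a marginal gain of the "wrong" sign appears: in the general double-greedy analysis submodularity forces $a+b\ge 0$ at every step, so at least one of $a,b$ is nonnegative and the chosen move never decreases the running lower-bound value $\Theta(x,\underline y^k)$; I would note that this consistency is automatic here because $\Theta(x,\cdot)$ is genuinely submodular. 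Beyond that the argument is a direct citation plus the short separable computation above, so no further heavy lifting is needed.
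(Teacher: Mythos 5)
Your proposal is correct and follows essentially the same route as the paper: the paper's own proof of the $\tfrac13$-approximation explicitly states that the obtuse case "recovers the setting of \cite{buchbinder2015tight}" and merely writes out a streamlined version of that coupling argument (the sequence $y^k=(y^*\vee\underline{y}^k)\wedge\overline{y}^k$ and the telescoping bound), where you instead cite the theorem after checking its hypotheses — submodularity via Proposition~\ref{prop:sub_sup}, nonnegativity of $\Theta$, and the verbatim match of Algorithm~\ref{alg:DDG} with the deterministic double greedy, including the $a+b\ge 0$ observation the paper also records. Your treatment of the column-orthogonal case (modularity, $a=-b$ independent of the other coordinates, hence coordinatewise optimal decisions) is the same computation the paper gives.
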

	\begin{proof}
When \( C \) is obtuse, Proposition~\ref{prop:sub_sup} implies that \( \Theta(x, \cdot) \) is submodular for any \( x \in \mathcal{X} \). This recovers the setting of \cite{buchbinder2015tight}, and for readability, we present a streamlined adaptation of the analysis. By the fact
		\begin{eqnarray}
			\nonumber (\underline{y}^{k-1} + \mathbf{e}_k) \lor (\overline{y}^{k-1} - \mathbf{e}_k) &=& \overline{y}^{k-1}, \\
			\nonumber  (\underline{y}^{k-1} + \mathbf{e}_k) \land (\overline{y}^{k-1} - \mathbf{e}_k) &=& \underline{y}^{k-1},
		\end{eqnarray}
		we have
        	\begin{equation*}
    	a +b  =  \Theta(x, \underline{y}^{k-1} + \mathbf{e}_k) + \Theta(x, \overline{y}^{k-1} - \mathbf{e}_k ) - \Theta(x, \underline{y}^{k-1}  ) - \Theta(x, \overline{y}^{k-1} ) \ge 0.    	
        	\end{equation*}
        				For an optimal solution $y^* \in \operatorname{SOL}(\Theta(x,\cdot),\mathcal Y)$, let $y^k := (y^* \lor \underline{y}^k) \land \overline{y}^k$ for $k = 1,\ldots,n$. Then we yield $y^0 = y^*$ and $y^n = \underline{y}^n = \overline{y}^n = \hat y$.
		Without loss of generality, assume that $a \ge b$ at the $k$-th iteration and hence $y^k = y^{k-1} + \mathbf{e}_k$, $\underline{y}^k = \underline{y}^{k-1} + \mathbf{e}_k$ and $\overline{y}^k = \overline{y}^{k-1}$.
		By the submodularity of $\Theta(x,\cdot)$ and the fact $y^{k-1} \ge \underline{y}^{k-1}$ we have
		\begin{equation}\label{eq:lem2_proof_1}
			\Theta(x,y^{k-1}) - \Theta(x,y^{k}) \le \Theta(x,\underline{y}^k) - \Theta(x,\underline{y}^{k-1}).
		\end{equation}
		By summing equation (\ref{eq:lem2_proof_1}) from \( k = 1 \) to \( n \), we obtain
		\[
		\Theta(x,y^*) - \Theta(x,y^n) \le \Theta(x,y^n) - \Theta(x,\mathbf{0}) + \Theta(x,y^n) - \Theta(x,\mathbf{1}).
		\]
By the fact $\Theta(x,y) \ge 0$ for any $x \in \mathcal X, y \in \mathcal Y$ and $y^n = \hat y$ we obtain that $\hat y$ is a $\frac 13$-approximate solution.

When \( C \) is column-orthogonal, Proposition~\ref{prop:sub_sup} implies that \( \Theta(x, \cdot) \) is modular for any \( x \in \mathcal{X} \), which is equivalent to its separability in \( y \). That is, maximizing \( \Theta(x, \cdot) \) over \( y \in \{0,1\}^n \) reduces to independently selecting, for each coordinate \( y_k \), the value in \( \{0,1\} \) that maximizes the function.
Moreover, due to modularity, at each step \( k \) of Algorithm \ref{alg:DDG}, we have
\[
a = -b = \Theta(x, \mathbf{e}_k) - \Theta(x, \mathbf{0}) = \frac 12 \|c_k\|^2 - (C^\top F(x))_k.
\]
The algorithm's decision rule thus corresponds exactly to choosing the optimal value for each coordinate independently. Consequently, Algorithm \ref{alg:DDG} returns an optimal solution to the maximization problem.
	\end{proof}

Based on Lemma \ref{lem:DG}, when \( C \) is column-orthogonal, Algorithm \ref{alg:DDG}'s coordinate-wise decisions exactly coincide with the strategy used in \cite{chen2024min} to obtain the closed-form solution of the inner maximization problem. Lemma \ref{lem:DG} thus extends their approach to the broader case where \( C \) is obtuse, and provides a provable approximation guarantee.

Combining Lemma \ref{thm:alg_PG} and Lemma \ref{lem:DG}, we directly establish the convergence and complexities of Algorithm~\ref{alg:RLLS_submax} equipped with the DG algorithm as the subsolver  when used for solving the submodular linear \ref{eq:binary_RLS} problem.
\begin{theorem} \label{thm:submodular_linear}
		Assume that $F$ is affine. Set $K = \left\lceil 9\left(\frac{D^2+L^2}{2\epsilon}\right)^2 \right\rceil
		$ for a given $\epsilon >0$. When $C$ is obtuse, Algorithm \ref{alg:RLLS_submax} with $\alg$ replaced by $\operatorname{DG}$ outputs a $(\frac 13 , \epsilon)$-approximate minimax point. When $C$ is column-orthogonal, Algorithm \ref{alg:RLLS_submax} with $\alg$ replaced by $\operatorname{DG}$ outputs an $\epsilon$-global minimax point.
	\end{theorem}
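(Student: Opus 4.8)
The plan is to assemble this theorem directly from the two lemmas already proved, treating it essentially as a corollary. First I would recall that Lemma~\ref{lem:DG} tells us exactly two things about the double greedy subsolver $\operatorname{DG}$: when $C$ is obtuse, $\Theta(x,\cdot)$ is submodular (Proposition~\ref{prop:sub_sup}) and $\operatorname{DG}$ is a $\frac{1}{3}$-approximation algorithm for $\max_{y\in\{0,1\}^n}\Theta(x,\cdot)$ for every fixed $x\in\mathcal{X}$; when $C$ is column-orthogonal, $\operatorname{DG}$ returns an exact optimal solution, i.e.\ it is a $1$-approximation algorithm. The key observation is that $\operatorname{DG}$ runs in $O(n)$ function evaluations, hence in polynomial time, so it meets the running-time requirement in Definition~\ref{def:approx_alg}. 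Also $\Theta(x,y)=\frac12\|F(x)-Cy\|^2\ge 0$, so the nonnegativity hypothesis in that definition is satisfied.

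Next I would invoke Lemma~\ref{thm:alg_PG} with the subsolver $\alg$ instantiated as $\operatorname{DG}$. For the obtuse case, apply it with $\gamma=\frac{1}{3}$: the lemma requires $K=\lceil((D^2+L^2)/(2\epsilon'))^2\rceil$ to yield a $(\gamma,\epsilon'/\gamma)$-approximate minimax point. To obtain a $(\frac13,\epsilon)$-approximate minimax point we need $\epsilon'/\gamma=\epsilon$, i.e.\ $\epsilon'=\epsilon/3$; substituting gives $K=\lceil((D^2+L^2)/(2\epsilon/3))^2\rceil=\lceil 9((D^2+L^2)/(2\epsilon))^2\rceil$, which is exactly the value stated in the theorem. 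This confirms both the output guarantee and the iteration count. For the column-orthogonal case, apply Lemma~\ref{thm:alg_PG} with $\gamma=1$ and $\epsilon'=\epsilon$: since column-orthogonal implies $C$ is acute (a column-orthogonal matrix is both acute and obtuse, as noted after Definition~\ref{def:acute_obtuse}), $\operatorname{DG}$ is an exact maximization oracle by Lemma~\ref{lem:DG}, and the lemma delivers a $(1,\epsilon)$-approximate minimax point, which is by definition an $\epsilon$-global minimax point. Here the same $K=\lceil 9((D^2+L^2)/(2\epsilon))^2\rceil$ still works since it only overshoots the minimal requirement $\lceil((D^2+L^2)/(2\epsilon))^2\rceil$ for $\gamma=1$.

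The proof is essentially mechanical, so I do not anticipate a genuine obstacle; the one point requiring a moment of care is the bookkeeping of constants, namely that feeding $\epsilon'=\epsilon/3$ into Lemma~\ref{thm:alg_PG} produces precisely the factor $9$ in the stated $K$ and precisely the target accuracy $\epsilon$ in the $(\frac13,\epsilon)$ guarantee, and that this same (larger-than-necessary) $K$ is still valid in the column-orthogonal subcase. A secondary point worth stating explicitly is that $\operatorname{DG}$ qualifies as an ``algorithm'' in the sense of Definition~\ref{def:approx_alg}, i.e.\ it is polynomial-time and the objective is nonnegative; both are immediate but should be mentioned so the application of Lemma~\ref{thm:alg_PG} is airtight. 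With those observations in place the theorem follows in a few lines.

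\begin{proof}
Since $F$ is affine, the hypotheses of Lemma~\ref{thm:alg_PG} on $\Theta$ hold. The subsolver $\operatorname{DG}$ (Algorithm~\ref{alg:DDG}) performs $O(n)$ evaluations of $\Theta(x,\cdot)$ and hence runs in polynomial time; moreover $\Theta(x,y)=\frac12\|F(x)-Cy\|^2\ge 0$, so $\operatorname{DG}$ is an admissible $\gamma$-approximation algorithm in the sense of Definition~\ref{def:approx_alg}.

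\emph{Obtuse case.} By Proposition~\ref{prop:sub_sup}, $\Theta(x,\cdot)$ is submodular for every $x\in\mathcal{X}$, and by Lemma~\ref{lem:DG}, $\operatorname{DG}$ is a $\frac13$-approximation algorithm for $\max_{y\in\{0,1\}^n}\Theta(x,\cdot)$ for every fixed $x\in\mathcal{X}$. Apply Lemma~\ref{thm:alg_PG} with $\gamma=\frac13$ and accuracy parameter $\epsilon'=\epsilon/3$: with
\[
K=\left\lceil\left(\frac{D^2+L^2}{2\epsilon'}\right)^2\right\rceil
=\left\lceil 9\left(\frac{D^2+L^2}{2\epsilon}\right)^2\right\rceil,
\]
the output $(\hat x,\hat y)$ of Algorithm~\ref{alg:RLLS_submax} is a $(\gamma,\epsilon'/\gamma)$-approximate minimax point, i.e.\ a $(\frac13,\epsilon)$-approximate minimax point of \ref{eq:binary_RLS}.

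\emph{Column-orthogonal case.} Here $C$ is in particular acute, so by Proposition~\ref{prop:sub_sup} the function $\Theta(x,\cdot)$ is modular; by Lemma~\ref{lem:DG}, $\operatorname{DG}$ returns an exact maximizer of $\Theta(x,\cdot)$ over $\{0,1\}^n$, hence is a $1$-approximation algorithm. Applying Lemma~\ref{thm:alg_PG} with $\gamma=1$ and accuracy $\epsilon$ (the stated $K$ exceeds the value $\lceil((D^2+L^2)/(2\epsilon))^2\rceil$ required there, so the guarantee still holds), the output $(\hat x,\hat y)$ is a $(1,\epsilon)$-approximate minimax point, which by definition is an $\epsilon$-global minimax point of \ref{eq:binary_RLS}.
\end{proof}
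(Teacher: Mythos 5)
Your proposal is correct and follows exactly the route the paper intends: the paper gives no written proof, simply stating that the theorem follows by combining Lemma~\ref{thm:alg_PG} and Lemma~\ref{lem:DG}, and your instantiation with $\gamma=\tfrac13$, $\epsilon'=\epsilon/3$ (yielding the factor $9$ in $K$) and $\gamma=1$ in the column-orthogonal case is precisely that combination, carried out with the constant bookkeeping made explicit.
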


{
\begin{remark}[On the Submodular Nonlinear Case]
\label{rem:nonlinear_obtuse}
Theorem~\ref{thm:submodular_linear} shows that, when \(F\) is affine and \(C\) is obtuse,  Algorithm \ref{alg:RLLS_submax} with $\alg$  subsolver yields a \((1/3,\epsilon)\)-approximate minimax guarantee. If the affine assumption on \(F\) is removed, the inner problem remains submodular in \(y\) for every fixed \(x\), and hence the DG  algorithm can still be used as an inner subsolver. One practical route is to combine this subsolver with a local nonlinear least-squares method, such as a Gauss--Newton-type scheme, by linearizing \(F\) around the current iterate and applying the linear BRLS framework to the resulting local model. However, the proof of Theorem~\ref{thm:submodular_linear} relies on the convexity in \(x\) induced by affine \(F\), and therefore does not yield an approximate minimax guarantee for the nonlinear case.

Another possible route is to work with the relaxation form \ref{eq:binary_RLS2_chen}, viewed as a smooth nonconvex-nonconcave minimax problem. Existing methods for nonconvex-nonconcave minimax optimization may then be used to seek local equilibrium or stationarity guarantees, see, e.g., \cite{daskalakis2023stay,daskalakis2018limit,jin2020local}.
\end{remark}
}

{
\subsection{Linear BRLS with General Noise Correlation Matrices}
\label{sec:general-C}

In this subsection, we consider  a general matrix \(C\) in problem (\ref{eq:binary_RLS}), including mixed-angle matrices for which the off-diagonal entries of \(C^\top C\) have mixed signs. For a fixed \(x\), the inner maximization problem is a positive-semidefinite Boolean quadratic programming (BQP) problem. To illustrate that the classical SDP approximation result can apply to this kind of problem, we first convert it exactly into a homogeneous quadratic maximization problem over \(\{\pm1\}\) variables with an anchoring constraint. Let
\[
s=2y-\mathbf 1\in\{\pm1\}^{n} \quad {\rm and} \quad
v=\begin{pmatrix}s\\1\end{pmatrix}.
\]
Since \(y=(s+\mathbf 1)/2\), we have
\[
F(x)-Cy
 =
\left(F(x)-\frac12 C\mathbf 1\right)-\frac12 Cs
 =
Qv,
\qquad
Q:=\left[-\frac12 C,\; F(x)-\frac12 C\mathbf 1\right].
\]
Therefore, with \(H:=Q^\top Q\succeq0\), the inner maximization problem in (\ref{eq:binary_RLS}) is equivalently written as
\begin{equation}\label{eq:BQP}
	\max_{\substack{v \in \{\pm1\}^{n+1}\\ v_{n+1}=1}}
	\frac12 v^\top H v.
\end{equation}

The following result is a standard consequence of the SDP relaxation and hyperplane-rounding guarantee for positive-semidefinite Boolean quadratic maximization due to \cite{Nesterov1998Semidefinite}, together with the derandomization framework for SDP-based approximation algorithms in \cite{Mahajan1995Derandomizing}. We state it in the anchored form needed for \eqref{eq:BQP}. The anchoring constraint \(v_{n+1}=1\) causes no difficulty, since the objective is homogeneous quadratic and a simultaneous sign flip of all components preserves the objective value. The slack parameter \(\eta\) accounts for solving the SDP relaxation only to finite relative accuracy.

\begin{lemma}
	\label{lem:sdp-approx-bqp}
	For any fixed \(\eta\in(0,2/\pi)\), problem \eqref{eq:BQP} admits a deterministic polynomial-time algorithm, denoted by \(\mathrm{ALG}_{\mathrm{SDP}}^\eta\), that returns a vector \(\hat v\in\{\pm1\}^{n+1}\) with \(\hat v_{n+1}=1\) and
	\[
	\frac12 \hat v^\top H\hat v
	\ge
	\left(\frac{2}{\pi}-\eta\right)
	\max_{\substack{v\in\{\pm1\}^{n+1}\\ v_{n+1}=1}}
	\frac12 v^\top Hv .
	\]
	Consequently, after mapping \(\hat y=(\hat v_{1:n}+\mathbf 1)/2\), the same algorithm provides a \((2/\pi-\eta)\)-approximation oracle for maximizing \(\Theta(x,\cdot)\) over \(\{0,1\}^n\), for any fixed \(x\in\mathcal X\).
\end{lemma}

Indeed, let \(Z^*\) denote the optimal value of the SDP relaxation.
For any prescribed \(\delta\in(0,1)\), standard interior-point methods can compute a feasible SDP solution with objective value at least \((1-\delta) Z^*\) in time polynomial in the problem size and \(\log(\delta^{-1})\); see Theorem 5.1 of \cite{vandenberghe1996semidefinite}.
Applying the Nesterov rounding guarantee to such an approximate SDP solution gives the ratio \((2/\pi)(1-\delta)\). Choosing \(\delta\) such that \((2/\pi)\delta\le \eta\) yields the stated \((2/\pi-\eta)\) guarantee.

In practical implementations, one may also solve the inner BQP using off-the-shelf global optimization solvers such as Gurobi. Such solvers can be useful for moderate-scale instances, but their worst-case complexity is exponential in general; hence they are not used in our theoretical oracle guarantees.

	With a constant-factor approximation algorithm for the general inner problem in hand, we can now apply Algorithm~\ref{alg:RLLS_submax} to the linear BRLS setting.
	
	\begin{corollary}
		\label{thm:general-bqls-minimax}
Assume that ${F}$ is affine. For any fixed $\eta\in(0,2/\pi)$, by employing $\alg = \mathrm{ALG}_{\mathrm{SDP}}^\eta$ (as described in Lemma~\ref{lem:sdp-approx-bqp}) in Algorithm~\ref{alg:RLLS_submax}, one can compute a $(\frac 2 \pi-\eta, \frac{\epsilon}{2/\pi-\eta})$-approximate minimax point for problem (\ref{eq:binary_RLS}). The iteration complexity remains $O(\epsilon^{-2})$ for any fixed $\eta$, while the oracle cost depends polynomially on the SDP accuracy required to realize this $\eta$.
	\end{corollary}

\begin{remark}[Models with Operator Noise]
\label{rem:operator_noise}
The above results for problem (\ref{eq:binary_RLS}) can  be extended to models with uncertainty in the operator. For example, one may consider
\[
    \min_{x \in \mathcal X}
    \max_{y \in \mathcal Y,\, z \in \mathcal Z}
    \left\|
    \left(A + \sum_{i=1}^{n_2} z_i A_i\right)x - b - C y
    \right\|^2 ,
\]
where \(\mathcal Y\) is either \(\{0,1\}^{n_1}\) or \([0,1]^{n_1}\), and \(\mathcal Z\) is either \(\{0,1\}^{n_2}\) or \([0,1]^{n_2}\). For each fixed \(x\), the inner maximization can still be reduced to a Boolean quadratic maximization. Therefore, the SDP-based oracle can still be applied to obtain a constant-factor approximation for the inner problem. However, the perturbation directions now include the \(x\)-dependent vectors \(A_i x\), so the fixed acute/obtuse structure that drives the submodular and supermodular oracles in this paper is generally absent.
This illustrates the role of the SDP oracle as a general-purpose fallback, while the sharper structural results of this paper rely on the sign structure of \(C^\top C\).
\end{remark}

\begin{remark}[Comparison of Inner Solvers]
\label{rem:algo_comparison}
The three inner solvers play different theoretical and computational roles. The Lov\'asz-extension-based SFM oracle and the double-greedy oracle exploit the sign structure of \(C^\top C\): the former gives exact inner maximization in the acute, supermodular case, while the latter gives a lightweight \(1/3\)-approximation in the obtuse, submodular case.
The SDP-based oracle is reserved for unstructured \(C\), where it provides a general constant-factor guarantee at a higher computational cost.
When the acute/obtuse geometry is available, the structural oracles are preferable because they use well-developed submodular-optimization tools.
\end{remark}
}

\section{Numerical Experiments}\label{sec:numerical}

{This section presents numerical experiments that illustrate the efficiency of the proposed BRLS model for handling representative structured uncertainty. The experiments cover a real label-corruption task, a synthetic linear BRLS task, and a nonlinear thresholded phase-retrieval task with missing binary labels. They are intended to complement the theoretical oracle-based guarantees by showing the robustness of the computed solutions of the BRLS model in representative settings, rather than to provide a comprehensive benchmark against the full range of task-specific noisy learning.}
The MATLAB codes of this section are publicly available in the GitHub repository:
\url{https://github.com/zhyg1212/Robust-Least-Squares-Problems-with-Binary-Uncertain-Data}.

\subsection{Health Status Prediction Using Wearable Sensor Data}
\label{sec:health_prediction}

 In this experiment, we evaluate the robustness of the \ref{eq:binary_RLS} model on a binary health status classification task using wearable sensor data under adversarial label corruption. The dataset is derived from \cite{chen2025differential}, comprising physiological and motion signals collected from  smart watches (e.g., heart rate, Sp$\text{O}_2$) and smart insoles (e.g., foot pressure, step frequency, center of pressure) at 5-second intervals over 10 days from 10 elderly users. Each user contributes approximately \( r_i \approx 130{,}000 \) data points (\(i = 1, \dots, 10\)), and each data point is labeled as either 0 (healthy) or 1 (ill). {In the test sets, the positive-label ratio varies across users from 21.41\% to 58.87\%, with an average of 33.71\%. To avoid relying only on a single aggregate metric, we report average accuracy together with confusion matrices in a representative high-noise setting.}

To construct the training data, we aggregate features within homogeneous label groups: for each user, we partition $ r_i $ samples into $ r=10{,}000 $ clusters of equal size.
Then each row $a_j,~ j= 1,\ldots r$, in the training matrix $ A \in \mathbb{R}^{r \times 28} $ is computed as the average of the corresponding feature vectors within its cluster. The corresponding label $(b_{\rm true})_j$ is set to be the label of its corresponding cluster.
{We consider label-flipping ratios $\rho\in\{0,0.05,\ldots,0.40\}.$
For each \(\rho\), \(\lceil \rho r\rceil\) entries of \(b_{\rm true}\) are flipped to generate the observed corrupted training label vector \(b\in \mathbb{R}^{r}\).}

{The methods are provided with a candidate unreliable-label set \(\mathcal I\), which may imperfectly indicate where the label corruptions occur. The set \(\mathcal I\) contains two types of samples: flipped labels that are correctly marked as unreliable, and unflipped labels that are mistakenly marked as unreliable. Note that there are also flipped labels that are not included in \(\mathcal I\). We control the quality of \(\mathcal I\) by two experimental parameters. The coverage of flipped labels is set to either \(80\%\) or \(50\%\), meaning that \(\mathcal I\) contains either \(80\%\) or \(50\%\) of the flipped labels. The precision of \(\mathcal I\) is fixed at \(80\%\), meaning that \(80\%\) of the samples in \(\mathcal I\) are truly flipped and the remaining \(20\%\) are unflipped false positives. These coverage and precision values are used only to generate the experimental scenarios; the algorithms receive only the candidate set \(\mathcal I\), not the true flipped-label set or the true coverage and precision values.

We compare ordinary LS, discard-set LS, discard-set tuned LASSO, trimmed LS, and BRLS. Ordinary LS is trained on all corrupted labels.
These baselines are chosen to compare different least-squares-based ways of using, discarding, or robustifying the corrupted training labels, in line with the robust least-squares focus of this paper.
Discard-set LS uses the candidate corrupted-label set by solving LS after removing samples in ${\mathcal I}$. The discard-set tuned LASSO baseline solves
\[
	x_{\rm lasso}(\lambda)\in\arg\min_x \|Ax-b\|^2+\lambda\|x\|_1,
\]
after removing the same candidate set, where $\lambda$ is selected from a logarithmic grid using a validation split.
Trimmed LS is a robust regression baseline \cite{rousseeuw1984least} that keeps the smallest residuals:
\[
	x_{\rm trim}\in
	\arg\min_x \min_{\substack{I\subseteq [r]\\ |I|=h}}
	\sum_{i\in I}(a_i^\top x-b_{i})^2,
	\qquad h=\max\{m+1,\lfloor(1-\rho)r\rfloor\}.
\]
In our implementation, this problem is solved by iteratively refitting LS on the $h$ smallest-residual samples. Unlike the discard-set baselines and BRLS, trimmed LS does not use ${\mathcal I}$. The BRLS estimator is formulated according to \eqref{eq:binary_classification}. In this label-flipping setting, $C$ is diagonal and its nonzero diagonal entries correspond to samples in ${\mathcal I}$. Since $C$ is
acute and obtuse, the inner maximization has a closed-form coordinate-wise solution. Thus BRLS uses partial side information about which samples may be unreliable; it does not observe the corrected labels and does not flip corrupted labels back directly.
}

For testing, we use the original filtered features and their ground-truth labels, denoted by
\[
    A_{\rm test}\in\mathbb R^{r_{\rm test}\times 28},
    \qquad
    b_{\rm test}\in\{0,1\}^{r_{\rm test}},
\]
with $r_{\rm test} = \sum_{i=1}^{10}r_i$.
For a learned parameter \(x\in\mathbb R^{28}\), the predicted test labels are defined by
\[
(b_{\rm pred}(\rho,x))_j =
\begin{cases}
1, & \text{if } (A_{\rm test}x)_j>0.5,\\
0, & \text{otherwise},
\end{cases}
\qquad j=1,\ldots,r_{\rm test}.
\]
We report the test accuracy
\[
    \operatorname{Accuracy}(\rho,x)
    =
    1-\frac{1}{r_{\rm test}}
    \|b_{\rm pred}(\rho,x)-b_{\rm test}\|_1 .
\]
%

\begin{figure}[htbp]
	\centering
	\includegraphics[width=0.85\textwidth]{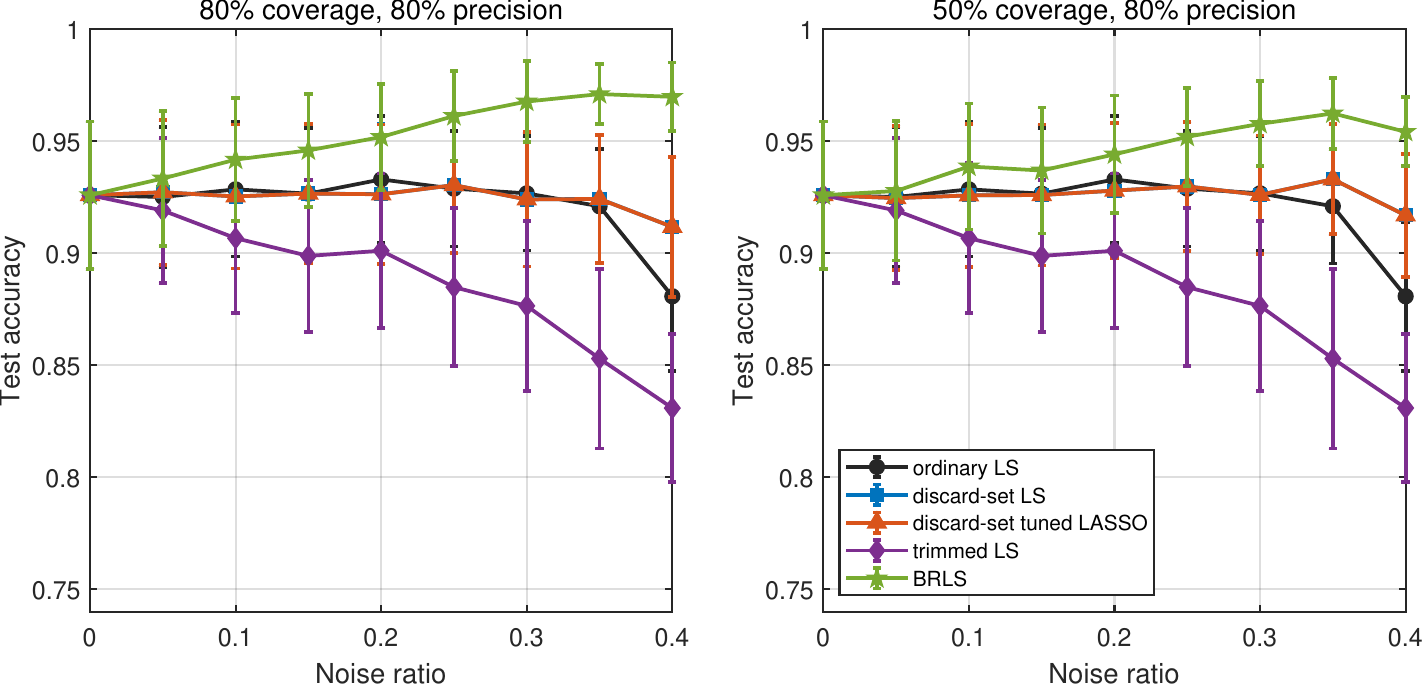}
	\caption{{Average accuracy over 10 users as a function of the label-flipping ratio with $r=10{,}000$. The candidate corrupted-label set has $80\%$ precision in both panels, and covers $80\%$ of the truly flipped labels in the left panel and $50\%$ in the right panel. The comparison includes ordinary LS, discard-set LS, discard-set tuned LASSO, trimmed LS, and BRLS.}}
	\label{fig:accuracy_comparison}
\end{figure}

\begin{figure}[htbp]
	\centering
	\includegraphics[width=1\textwidth]{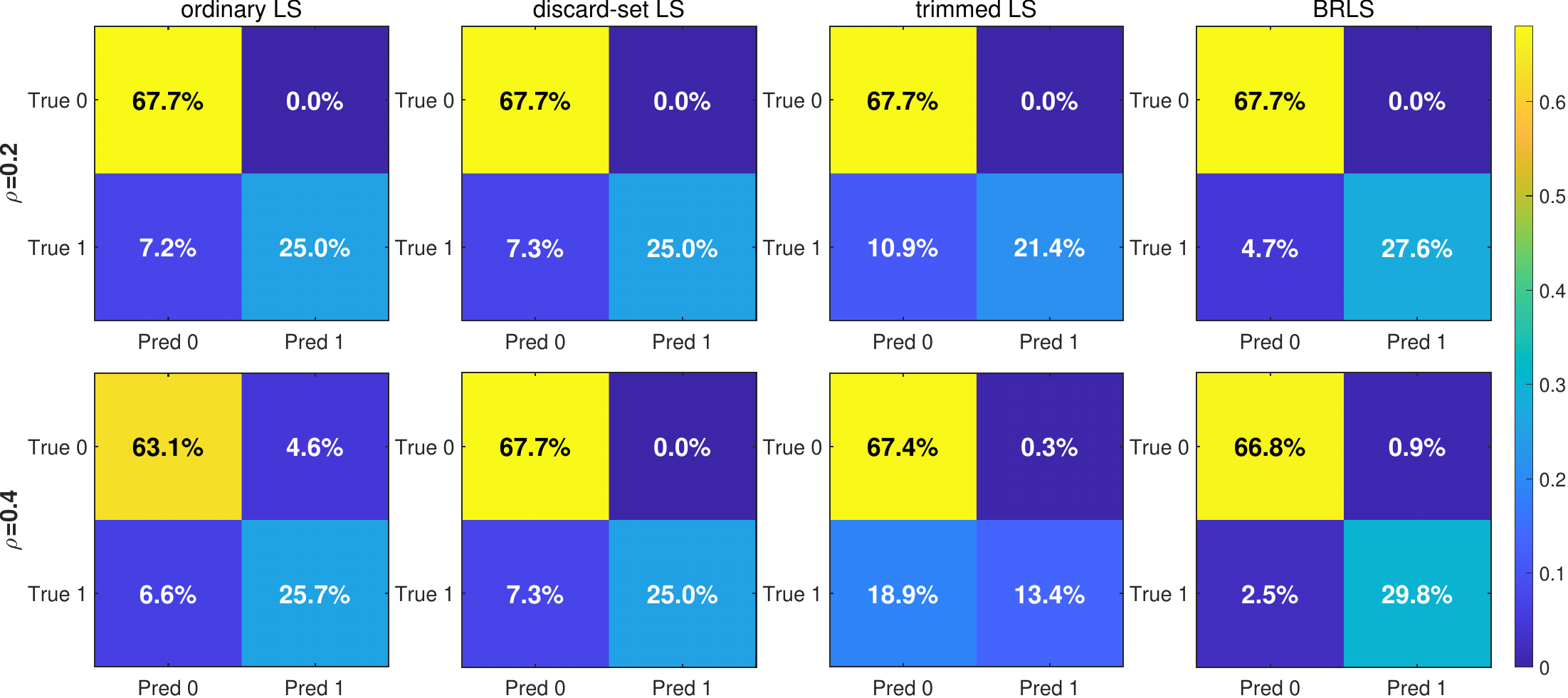}
	\caption{{Confusion matrices for the health-status experiment with $r=10{,}000$, $80\%$ coverage, and $80\%$ precision. The two rows correspond to $\rho=0.2$ and $\rho=0.4$, and all panels use the same color scale.}}
	\label{fig:heatmap_matrix}
\end{figure}

{
Figure~\ref{fig:accuracy_comparison} shows that all methods perform similarly when the label-flipping ratio is small. As $\rho$ increases, ordinary LS, discard-set LS, discard-set tuned LASSO, and trimmed LS degrade more rapidly, while BRLS retains higher average accuracy in these settings. This indicates that the BRLS formulation uses the candidate corrupted-label information differently from simply discarding suspected samples. Figure~\ref{fig:heatmap_matrix} further compares representative moderate- and high-noise cases. In particular, at $\rho=0.4$, BRLS substantially reduces false negatives and retains clearer diagonal dominance than the baselines.
}

{
\subsection{Synthetic Linear BRLS}\label{sec:controlled_hrls}

In this experiment, we consider a synthetic linear BRLS problem
\[
	\min_x \max_{y\in\{0,1\}^n} \frac12\|Ax-b-Cy\|^2,
	\qquad b=A x_{\rm true}+\xi ,
\]
where $x_{\rm true}\in\mathbb R^m$ is randomly generated and $\xi$ is independent Gaussian noise.  To avoid a diagonal or identity measurement model, we construct a high-dimensional measurement matrix $A\in\mathbb R^{r\times m}$ by embedding the structured noise directions $C\in\mathbb R^{r\times n}$ as part of the measurement directions and completing the remaining columns with sparse random normalized columns. The matrix $C$ is generated in three forms: acute, obtuse, and mixed-angle, by constructing normalized columns whose pairwise inner products are nonnegative, nonpositive, or mixed in sign, respectively. This design keeps the prescribed noise-correlation structure while testing the method on a non-identity linear map. We set $r=2m$ and use the five paired settings
\[
    (m,n)\in\{(100,30),(500,60),(1500,100),(3000,150),(5000,200)\}.
\]
For BRLS, the inner maximization is solved by the double-greedy subsolver. DG has a theoretical approximation guarantee in the submodular inner case; for the acute and mixed-angle cases in this experiment, we use it as a lightweight heuristic inner solver. Thus this experiment is intended as a scalability and structured-robustness test rather than a certified worst-case benchmark. We compare BRLS with LS.

Let
\(b_{\rm true}=Ax_{\rm true}\) denote the noise-free target signal. For a given solution \(x\), let
\(\hat y_{\rm DG}(x)\) be the output returned by the DG inner solver applied to
\[
    \max_{y\in\{0,1\}^n}\frac 12\|Ax-b_{\rm true}-Cy\|^2 .
\]
We define the DG-estimated adversarial residual error by
\[
    \operatorname{Err}(x)
    :=
    \frac{1}{2}\|Ax-b_{\rm true}-C\hat y_{\rm DG}(x)\|^2 .
\]
This quantity measures the squared residual after applying the DG-estimated worst-case binary perturbation.

Table~\ref{tab:controlled_hrls} reports the relative reduction of this adversarial residual error achieved by BRLS compared with LS:
\[
    \frac{
    \operatorname{Err}(x_{\rm LS})
    -
    \operatorname{Err}(x_{\rm BRLS})
    }{
    \operatorname{Err}(x_{\rm LS})
    }
    \times 100\%.
\]
BRLS consistently reduces the DG-estimated adversarial residual error for acute, obtuse, and mixed-angle matrices across all tested dimensions.
}

\begin{table}[htbp]
	\centering
	{
	\caption{DG-estimated adversarial residual error reduction of BRLS relative to LS in the synthetic high-dimensional BRLS experiment.}
	\label{tab:controlled_hrls}
	\begin{tabular}{ccccc}
		\hline
		$m$ & $n$ & acute (\%) & obtuse (\%) & mixed-angle (\%) \\
		\hline
		100  & 30  & 73.8 & 66.6 & 62.6 \\
		500  & 60  & 74.4 & 70.9 & 66.8 \\
		1500 & 100 & 73.5 & 71.9 & 68.1 \\
		3000 & 150 & 73.4 & 72.5 & 68.4 \\
		5000 & 200 & 73.3 & 72.6 & 69.7 \\
		\hline
	\end{tabular}
	}
\end{table}

\subsection{Thresholded Phase Retrieval with Missing Binary Labels}
\label{sec:robustness_comparison}

{
We finally consider a nonlinear missing-label problem motivated by thresholded phase retrieval. In standard phase retrieval, the measurements depend on the squared magnitudes $(a_i^\top x_{\rm true})^2$ rather than on the signed linear responses $a_i^\top x_{\rm true}$. In settings where only coarse or binary information is retained, a natural observation model is to record whether the intensity exceeds a prescribed threshold. We consider the binary labels
\[
(b_{\rm true})_i=\mathbf 1{\{(a_i^\top x_{\rm true})^2\geq 1/2\}},\quad i=1,\ldots,r.
\]
Labels close to the threshold are intrinsically less reliable, since small perturbations in the sensing vector, the signal, or the measurement process may change the binary decision. This makes thresholded phase retrieval a natural nonlinear testbed for binary labels that are partially missing or unknown.
Let $\mathcal I\subseteq [r]$ denote the samples whose labels are missing or unknown. For the remaining samples $i\notin\mathcal I$, an observed label $\widetilde b_i$ is available; a small portion of these known labels may still be erroneous.

We compare two least-squares-based ways of using the missing-label set. The first baseline is discard-set LS, which removes the samples in $\mathcal I$ and solves
\[
\min_x \sum_{i\notin\mathcal I}\left((a_i^\top x)^2-\widetilde b_i\right)^2.
\]

The second method is BRLS, which keeps the missing-label samples and lets the inner binary adversary choose the corresponding binary labels.
Note that the labels for \(i\in\mathcal I\) are unknown, so they cannot be directly used to form the observed-label vector in BRLS. To address this issue, we introduce an auxiliary nominal label vector \(\bar b\in\mathbb R^r\) and  define the diagonal matrix \(C\in\mathbb R^{r\times r}\) by
\[
\bar b_i=
\begin{cases}
\widetilde b_i, & i\notin\mathcal I,\\
0, & i\in\mathcal I,
\end{cases}
\qquad
c_{ii}=
\begin{cases}
0, & i\notin\mathcal I,\\
1, & i\in\mathcal I.
\end{cases}
\]
Then \(\bar b_i+c_{ii}y_i=\widetilde b_i\) for \(i\notin\mathcal I\), while
\(\bar b_i+c_{ii}y_i=y_i\in\{0,1\}\) for \(i\in\mathcal I\). Thus the unknown binary labels are covered by the same binary uncertainty mechanism as in \eqref{eq:binary_classification}. By setting
\[
(F(x))_i=(a_i^\top x)^2-\bar b_i,\qquad i=1,\ldots,r,
\]
this problem takes the form of problem (\ref{eq:binary_RLS})
\begin{equation}\label{sec5.3-1}
\min_x\max_{y\in\{0,1\}^r}\frac12\|F(x)-Cy\|^2 =
\min_x\max_{y\in\{0,1\}^r}\frac12\sum_{i=1}^{r}\left( (a_i^\top x)^2-\bar b_i - c_{ii}y_i \right)^2.
\end{equation}
Since the inner binary variables are coordinatewise separable, this minimax problem is equivalently the single-level nonsmooth minimization problem
\begin{equation}\label{sec5.3-2}
\min_x \frac12\left\{\sum_{i\notin\mathcal I}\left((a_i^\top x)^2-\widetilde b_i\right)^2
+\sum_{i\in\mathcal I}\max\left[(a_i^\top x)^4,
\left((a_i^\top x)^2-1\right)^2\right]\right\}.
\end{equation}

We partition the training samples into three disjoint groups according to the status of their labels: samples with unknown labels in \(\mathcal I\), samples with observed but noisy labels satisfying \(i\notin\mathcal I\) and \(\widetilde b_i\neq (b_{\rm true})_i\), and samples with observed clean labels satisfying \(i\notin\mathcal I\) and \(\widetilde b_i=(b_{\rm true})_i\).
The main experimental factor is the proportion of samples with unknown labels, while the observed noisy-label group represents additional label errors among the samples whose labels are observed.
This presentation directly reflects the information used by the two methods: discard-set LS removes the unknown group and fits the remaining observed labels, whereas BRLS keeps the unknown group and protects against binary label choices within it. The experiment uses signal dimension $m=200$, $r=10000$ training samples, and an independent test set $\{a_i^{\rm test}, (b_{\rm test})_i\}_{i=1}^{r_{\rm test}}$ with $r_{\rm test} = 30000$ samples.
To avoid class imbalance effects in the label-accuracy metric, we generate the training and test sets so that the clean binary labels are approximately balanced before introducing missing and noisy labels.
We report medians over ten independent trials.

We evaluate two quantities. The signal recovery error  ``$x$-err.'' is defined as
\[
{ x\text{-err.}} =
\frac{\min\{\|x-x_{\rm true}\|,\|x+x_{\rm true}\|\}}{\|x_{\rm true}\|},
\]
where the minimum accounts for the sign ambiguity of phase retrieval. The test label accuracy ``$b$-acc.'' is defined as
$$
b\text{-acc.} = 1- \frac{\sum_{i=1}^{r_{\rm test}} |\mathbf{1}\{((a_i^{\rm test})^\top x)^2 \ge 1/2\} - (b_{\rm test})_i| }{r_{\rm test}}.
$$

\begin{table}[h]
\centering
\caption{{Compare $x$-err. and $b$-acc. defined by LS solution of  (\ref{sec5.3-1}) with $C = \mathbf 0$ and BRLS solution of (\ref{sec5.3-2}). Unknown, observed noisy, and observed clean denote the label status of the training samples. }}
\label{tab:thresholded_phase_retrieval}

\begingroup
\setlength{\tabcolsep}{3.5pt}
\renewcommand{\arraystretch}{1.12}

\begin{tabular}{@{}ccc cc cc@{}}
\toprule
Unknown & Observed noisy & Observed clean & \multicolumn{2}{c}{$x$-err.} & \multicolumn{2}{c}{$b$-acc.} \\
\cmidrule(lr){4-5} \cmidrule(l){6-7}
& & & LS & BRLS & LS & BRLS \\
\midrule
$20\%$ & $5\%$ & $75\%$ & 0.0323 & \textbf{0.0178} & 0.9108 & \textbf{0.9495} \\
$30\%$ & $5\%$ & $65\%$ & 0.0315 & \textbf{0.0146} & 0.9136 & \textbf{0.9598} \\
$40\%$ & $5\%$ & $55\%$ & 0.0301 & \textbf{0.0129} & 0.9168 & \textbf{0.9639} \\
$50\%$ & $5\%$ & $45\%$ & 0.0729 & \textbf{0.0186} & 0.8510 & \textbf{0.9545} \\
$60\%$ & $5\%$ & $35\%$ & 0.0918 & \textbf{0.0389} & 0.8349 & \textbf{0.9067} \\
\midrule
$20\%$ & $10\%$ & $70\%$ & 0.0339 & \textbf{0.0178} & 0.9053 & \textbf{0.9497} \\
$30\%$ & $10\%$ & $60\%$ & 0.0332 & \textbf{0.0153} & 0.9072 & \textbf{0.9569} \\
$40\%$ & $10\%$ & $50\%$ & 0.0323 & \textbf{0.0130} & 0.9094 & \textbf{0.9635} \\
$50\%$ & $10\%$ & $40\%$ & 0.1253 & \textbf{0.0286} & 0.8085 & \textbf{0.9317} \\
$60\%$ & $10\%$ & $30\%$ & 0.1619 & \textbf{0.0493} & 0.7829 & \textbf{0.8785} \\
\bottomrule
\end{tabular}

\endgroup
\end{table}

Table~\ref{tab:thresholded_phase_retrieval} shows that BRLS improves signal recovery and clean test-label prediction in most regimes tested. As the unknown group becomes large, the signal recovery error and test label accuracy of the LS and BRLS methods
increase and decrease, respectively. This is expected because the effective amount of known information is necessary for stable recovery in this problem.
}

\section{Conclusion}\label{sec:conclusions}
This paper proposes the binary robust least squares (\ref{eq:binary_RLS}) model, which unifies a broad class of robust least squares problems including those with binary label uncertainty and hypercube-constrained adversarial noise.
{We present theoretical results for the
inner binary maximization problem with a convex quadratic objective function
$\frac{1}{2}\|F(x)-Cy\|^2$ and  complexity bounds for finding  minimaxi points of  problem (\ref{eq:binary_RLS}) by some existing algorithms based on
the structure of the matrix $C$ and inner oracle for supermodular/submodular functions. In particular, the projected gradient algorithm for
acute $C$, the double greedy algorithm for obtuse $C$ and the SDP approximation solvers for general $C$ and affine $F$.  This oracle-based viewpoint yields explicit global minimax, approximate minimax, and Moreau-envelope stationarity guarantees across the linear and nonlinear settings. }
{Future work may extend the framework to nonlinear submodular and nonlinear general-$C$ settings and structure-exploiting algorithms for mixed-angle noise propagation matrices beyond the general SDP-based approximation oracle.}

\section*{Acknowledgements}
{This work is supported by Natural Science Foundation of Shandong Province (No. ZR2025MS24) and National Science Foundation of China (No. 12371099), CAS-Croucher Funding Scheme for the CAS AMSS-PolyU Joint Laboratory in Applied Mathematics and Hong Kong Research Grant Council projects PolyU15300024, JLFS/P-501/24.
{The authors would like to thank the three anonymous reviewers for their insightful and constructive comments which help us to improve the quality of this paper.}

	\bibliographystyle{siam}
	\bibliography{RNLS_bib}	
\end{document}